\titleformat{\subsection}[runin]
{\bfseries} {\thesubsection{.}}{0.15cm}{}[.]
\titleformat{\subsubsection}[runin]
{\em}{\thesubsubsection{.}}{0.15cm}{}[.]
\newtheorem{theorem}{Theorem}[section]
\newtheorem{lemma}[theorem]{Lemma}
\newtheorem{corollary}[theorem]{Corollary}
\theoremstyle{definition}
\newtheorem{definition}[theorem]{Definition}
\newtheorem{remark}[theorem]{Remark}
\newtheorem{problem}[theorem]{Problem}
\numberwithin{equation}{section}
\numberwithin{figure}{section}
\def\Pcal{\mathcal{P}}
\def\Ucal{\mathcal{U}}
\def\Ascr{\mathscr{A}}
\def\Cscr{\mathscr{C}}
\def\Oscr{\mathscr{O}}
\def\c{\mathbb{C}}
\def\cp{\mathbb{CP}}
\def\z{\mathbb{Z}}
\def\r{\mathbb{R}}
\def\n{\mathbb{N}}
\def\s{\mathbb{S}}
\def\z{\mathbb{Z}}
\def\pgot{\mathfrak{p}}
\def\Agot{\mathfrak{A}}
\def\Ogot{\mathfrak{O}}
\def\Flux{\mathrm{Flux}}
\def\reg{\mathrm{reg}}
\newcommand\wt{\widetilde}
\newcommand\wh{\widehat}
\newcommand\di{\partial}
\newcommand\dibar{\overline\partial}
\newcommand\hra{\hookrightarrow}
\newcommand\GCMI{\mathrm{GCMI}}
\newcommand\CMI{\mathrm{CMI}}
\begin{document}

\fancyhead[LO]{Embedded minimal surfaces in $\r^n$}
\fancyhead[RE]{A.\ Alarc\'on, F.\ Forstneri\v c, and F.J.\ L\'opez}
\fancyhead[RO,LE]{\thepage}

\thispagestyle{empty}

%% Title
\vspace*{1cm}
\begin{center}
{\bf\LARGE Embedded minimal surfaces in $\r^n$}

\vspace*{0.5cm}

%% Authors
{\large\bf Antonio Alarc\'on, Franc Forstneri\v c, and Francisco J.\  L\'opez}
\end{center}

\vspace*{1cm}

\begin{quote}
{\small
\noindent {\bf Abstract}\hspace*{0.1cm}
In this paper, we prove that  every conformal minimal immersion of an open Riemann surface into $\r^n$ for $n\ge 5$ 
can be approximated uniformly on compacts by conformal minimal embeddings (see Theorem \ref{th:desing}). 
Furthermore, we show that every open Riemann surface carries a proper conformal minimal embedding into  $\r^5$
(see Theorem \ref{th:proper}). One of our main tools is a Mergelyan approximation theorem for 
conformal minimal immersions to $\r^n$ for any $n\ge 3$ which is also proved in the paper
(see Theorem \ref{th:Mergelyan}).

\vspace*{0.1cm}

\noindent{\bf Keywords}\hspace*{0.1cm} Riemann surfaces, minimal surfaces, conformal minimal embeddings.

\vspace*{0.1cm}

\noindent{\bf MSC (2010):}\hspace*{0.1cm} 53A10; 32B15, 32E30, 32H02.}
\end{quote}

%%%%%%%%%%
%%%%%%%%%%
%%%%%%%%%%   INTRODUCTION
%%%%%%%%%%
%%%%%%%%%%
%%%%%%%%%%

\section{Introduction} 
\label{sec:intro}
One of the central questions in Geometric Analysis is to understand whether an abstract manifold of a certain kind is embeddable 
as a submanifold of a Euclidean space. Notable 
results were obtained, among many others, by Whitney \cite{Wh} in 
differential geometry, Nash \cite{Nash} in Riemannian geometry, Greene and Wu  \cite{GW} in harmonic mapping theory, 
and Remmert \cite{Rem}, Bishop \cite{Bis}, Narasimhan \cite{Nar1,Nar2}, Eliashberg and Gromov \cite{EG} and 
Sch\"urmann \cite{Sch} in complex geometry. 

The same question is highly interesting in the context of {\em minimal submanifolds}, a fundamental subject in differential geometry. 
Two dimensional minimal submanifolds (i.e., {\em minimal surfaces}) in $\r^n$ are especially interesting objects. 
They lie at the intersection of several branches of Mathematics and Physics and enjoy powerful tools coming from 
differential geometry, topology, partial differential equations, and complex analysis; 
see \cite{Osserman} for a classical survey and \cite{MP1,MP2} for more recent ones, among others.

The aim of this paper is to obtain general embedding results for minimal surfaces in $\r^n$ for $n\geq 5$.  
Our first main result is the following.

\begin{theorem}
\label{th:desing}
Let $M$ be an open Riemann surface. If $n\ge 5$ then every conformal minimal immersion 
$u\colon M\to\r^n$ can be approximated uniformly on compacts in $M$ by conformal minimal embeddings. 
The same holds if $M$ is a compact bordered Riemann surface with nonempty boundary and
$u$ is of class $\Cscr^r(M)$ for some $r\in\n$; in such case, the approximation takes place in the $\Cscr^r(M)$ topology.
\end{theorem}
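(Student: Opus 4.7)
The plan is to carry out a standard exhaustion-plus-transversality argument within the category of conformal minimal immersions, using Theorem \ref{th:Mergelyan} as the principal flexibility tool and exploiting the codimensional inequality $n\ge 5>4=\dim_\r(M\times M)$ to eliminate self-intersections generically. For open $M$ I would fix a normal exhaustion $K_1\subset K_2\subset\cdots$ of $M$ by Runge compact sets and build inductively a sequence $(u_j)$ of conformal minimal immersions $M\to\r^n$ with $u_0=u$, where at each stage $u_j$ is close to $u_{j-1}$ on $K_{j-1}$ and $u_j|_{K_j}$ is a topological embedding. A summable choice of error tolerances then ensures that $\tilde u=\lim_j u_j$ is a conformal minimal immersion, close to $u$ on compacts, injective on each $K_j$, and hence a conformal minimal embedding of $M$.

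The core is the inductive step: given a conformal minimal immersion $v$ on $M$ and a compact Runge set $K\subset M$, produce an arbitrarily close conformal minimal immersion $\tilde v$ with $\tilde v|_K$ injective. I would consider the difference map $\Phi(p,q)=v(p)-v(q)$ on $(M\times M)\setminus\Delta$. Because $v$ is already an immersion, there is an open neighborhood $\mathcal{U}$ of the diagonal in $K\times K$ on which $v$ is injective, so every potential double point in $K\times K$ lies in the compact set $L=(K\times K)\setminus\mathcal{U}$ of real dimension at most $4$. To remove these double points I would construct a finite-dimensional \emph{spray} of conformal minimal immersions $(v_t)_{t\in\b^N}$ through $v_0=v$, so that the extended map $\widehat\Phi(t,p,q)=v_t(p)-v_t(q)$ is submersive at $t=0$ uniformly on $L$. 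Sard's theorem applied to the induced family $\widehat\Phi(t,\cdot,\cdot):L\to\r^n$, together with $\dim L\le 4<n$, produces arbitrarily small $t$ for which $\widehat\Phi(t,\cdot,\cdot)^{-1}(0)=\emptyset$; taking $\tilde v=v_t$ then yields an embedding on $K$.

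The dominating spray would be built through the Weierstrass representation $v=v(p_0)+\Re\int\phi$, with $\phi$ a holomorphic $1$-form taking values in the punctured null quadric $\Acal_*\subset\c^n$ and having vanishing real periods. Multiplying $\phi$ by holomorphic gauge factors supported near a finite collection of well-chosen points of $K$, and using Theorem \ref{th:Mergelyan} both to enforce the null-quadric constraint and to compensate the resulting period changes, yields an $N$-parameter family $\phi_t$ whose integrals $v_t$ form the required spray. Choosing enough gauge centers in sufficiently general position makes the infinitesimal variations at $t=0$ span $\r^n$ at any prescribed pair $(p,q)\in L$, and hence $\widehat\Phi$ submersive on the compact set $L$.

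The hard part will be precisely this spray construction: conformality plus minimality is a rigid first-order system, so one must engineer perturbations that are simultaneously localized, respect the null-quadric constraint, and close up to single-valued maps on $M$. The last of these is where the nontrivial topology of $M$ makes the period-compensation delicate. Once the spray is in place, the induction and passage to the limit are routine. The bordered case is a single-step version of the same argument, carried out in the $\Cscr^r(M)$ topology with Theorem \ref{th:Mergelyan} supplying the required approximation up to the boundary.
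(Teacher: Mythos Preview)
Your outline matches the paper's proof: the bordered case is established first as a separate general-position theorem (Theorem \ref{th:desingBRS}) via exactly the difference-map/spray/Sard argument you describe, and then combined with Theorem \ref{th:Mergelyan} in an exhaustion to handle open $M$ (Theorem \ref{th:desing2}). The only technical divergence is in the spray mechanism: rather than scalar gauge factors and an appeal to Theorem \ref{th:Mergelyan} for the constraints, the paper deforms $f=2\di u/\theta$ within $\Agot^*$ by composing flows of holomorphic vector fields tangent to $\Agot$ (so the null constraint is automatic), and handles the period correction not through Theorem \ref{th:Mergelyan} but by composing with a separate period-dominating spray (Lemma \ref{lem:deformation}) and applying the implicit function theorem to obtain an \emph{analytic} family with exact periods---Theorem \ref{th:Mergelyan} alone produces only a single approximant, not a parametrized family.
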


Recall that a conformal immersion $u\colon M\to\r^n$ ($n\geq 3$) of an open Riemann surface
$M$ is minimal if and only if it is harmonic: $\Delta u=0$. An immersion $u\colon M\to\r^n$ is said to be 
an {\em embedding} if $u\colon M\to u(M)$ is a homeomorphism.

More precisely, our proof will show that, for any $n\ge 5$, the set of all conformal minimal embeddings
$M\hra\r^n$ is of the second category in the Fr\'echet space of all conformal minimal immersions
$M\to\r^n$, endowed with the compact-open topology. 

Theorem \ref{th:desing} obviously fails in dimensions $n\le 4$ since transverse self-intersections
are stable in these dimensions. By using the tools of this paper, it can be seen that a
generic conformal minimal immersion $M\to\r^4$ has only simple double points
(normal crossings).

We now come to the following second main result of the paper.

%
%
%  PROPER CONFORMAL MINIMAL EMBEDDINGS INTO R5
%
%
\begin{theorem}\label{th:proper}
Every open Riemann surface $M$ carries a  proper  conformal minimal embedding into  $\r^5$.
Furthermore, if $K$ is a compact holomorphically convex set in $M$ and 
$n\geq 5$, then every conformal minimal embedding from a neighborhood of $K$ into $\r^n$ 
can be approximated, uniformly  on $K$, by proper conformal minimal embeddings $M\hra \r^n$.
\end{theorem}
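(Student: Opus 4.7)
The plan is to deduce both assertions from an inductive construction based on a normal exhaustion of $M$, combining Theorem \ref{th:Mergelyan} (Mergelyan approximation for conformal minimal immersions) with Theorem \ref{th:desing} (generic embeddedness in dimension $n\ge 5$). The first assertion reduces to the second: take any holomorphic chart on a small disk neighborhood of a point of $M$ and compose with the natural inclusion $\c=\r^2\hra\r^5$ to obtain a conformal minimal embedding of a compact holomorphically convex disk into $\r^5$, and then apply the second assertion. Thus it suffices to prove the approximation statement.

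Fix $n\ge 5$, a compact holomorphically convex set $K\subset M$, and a conformal minimal embedding $u_0$ of a neighborhood of $K$ into $\r^n$. Choose a normal exhaustion $K=K_0\subset K_1\subset K_2\subset\cdots$ of $M$ by smoothly bounded Runge compact sets satisfying $K_j\Subset \mathring{K}_{j+1}$ and $\bigcup_j K_j=M$. I would inductively construct conformal minimal embeddings $u_j$, defined on open neighborhoods of $K_j$, together with constants $\epsilon_j>0$ with $\sum_j\epsilon_j<\infty$, such that
\textbf{(a)} $\|u_j-u_{j-1}\|_{\Cscr^0(K_{j-1})}<\epsilon_j$ and
\textbf{(b)} $|u_j(x)|>2j$ for every $x\in K_j\setminus \mathring{K}_{j-1}$.
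Choosing each $\epsilon_j$ sufficiently small (both to guarantee Cauchyness and to preserve embeddedness, which is an open condition for immersions of compact sets), the limit $u=\lim_j u_j$ is a conformal minimal embedding of $M$ into $\r^n$, and \textbf{(b)} together with the summable tail perturbation yields $|u(x)|\ge j$ on $K_j\setminus\mathring{K}_{j-1}$, hence properness of $u$.

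The inductive step is the heart of the argument. Given $u_j$, I would first apply Theorem \ref{th:Mergelyan} to obtain a conformal minimal immersion $\wt u$ on a neighborhood of $K_{j+1}$ that approximates $u_j$ to within $\epsilon_{j+1}/3$ on $K_j$. Next, I would add to $\wt u$ a conformal minimal immersion $\eta$ that is $\Cscr^0$-small on $K_j$ but whose norm is large enough on $K_{j+1}\setminus\mathring{K}_j$ to guarantee $|\wt u+\eta|>2(j+1)$ there. Such an $\eta$ can be built by a finite-stage inductive construction within the annular region $K_{j+1}\setminus\mathring{K}_j$, using Mergelyan approximation on a chain of Runge ``bumps'' filling this annulus, with Weierstrass data whose periods vanish on the required homology classes and whose primitives can be made arbitrarily large along prescribed arcs. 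Finally, Theorem \ref{th:desing} applied to $\wt u+\eta$ produces a conformal minimal embedding $u_{j+1}$ arbitrarily close to $\wt u+\eta$ on a neighborhood of $K_{j+1}$, inheriting \textbf{(a)} and \textbf{(b)} via the triangle inequality.

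The principal obstacle is the construction of the pushing-out term $\eta$: producing a conformal minimal immersion that is as small as desired on a holomorphically convex set while being uniformly large on its annular complement requires a careful interplay between Runge approximation, the period (flux) vanishing conditions intrinsic to the Weierstrass representation, and the conformality constraint. These are precisely the ingredients behind Theorem \ref{th:Mergelyan}. The preservation of embeddedness at every stage is a secondary, but manageable, concern handled by Theorem \ref{th:desing} applied at each step, aided by the openness of injectivity for immersions in dimensions $n\ge 5$.
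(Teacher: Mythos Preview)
Your overall scaffolding (exhaustion, inductive approximation, limit argument, desingularization at each step via Theorem~\ref{th:desing}) matches the paper's. But the heart of your inductive step contains a genuine error: you propose to ``add to $\wt u$ a conformal minimal immersion $\eta$'' that is small on $K_j$ and large on the annulus. The sum of two conformal minimal immersions is \emph{not} a conformal minimal immersion. Conformality means $\sum_j (\di u_j)^2=0$, a quadratic condition on the derivative; if $\wt u$ and $\eta$ both satisfy it, then $\wt u+\eta$ need not, because the cross terms $2\sum_j (\di\wt u_j)(\di\eta_j)$ do not vanish in general. So the map $\wt u+\eta$ is harmonic but typically not conformal, and the argument breaks down right at the step that is supposed to produce properness.

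The paper handles the pushing-out in an entirely different way, and this is the nontrivial content of the proof. It works with the proper function $\max\{u^1,u^2\}$ rather than $|u|$, and achieves properness of the first two components. In the inductive step (Lemma~\ref{lem:proper}) the boundary of $M_i$ is subdivided into arcs on which either $u_i^1>i$ or $u_i^2>i$; after a first Mergelyan approximation, the annulus $M_{i+1}\setminus M_i$ is partitioned into regions on which one component is already large, together with small disks $D_{j,k}$ on which neither is. On those disks one simply \emph{translates} the other component by a large constant (translation preserves conformality), obtaining a conformal minimal immersion on a disconnected Runge set. The crucial tool is then Lemma~\ref{lem:Mergelyan2}: Mergelyan approximation for conformal minimal immersions in which all but two components are kept fixed. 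This lets one glue the translated pieces back into a global conformal minimal immersion on $M_{i+1}$ while only disturbing $u^1$ and $u^2$, so the lower bounds on the annulus survive. Nothing like this mechanism is present in your proposal; your ``finite-stage inductive construction within the annular region'' would need to face exactly this conformality obstruction, and you have not indicated how.
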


Our methods also provide the control of the {\em flux}  of the conformal minimal embeddings that we construct.
Recall that the flux of a conformal minimal immersion $u=(u_1,\ldots,u_n)\colon M\to\r^n$ is the homomorphism
$H_1(M;\z)\to\r^n$ given by the imaginary periods of the holomorphic $(1,0)$-form $\di u=(\di u_1,\ldots,\di u_n)$
(see \eqref{eq:flux}). In particular, the approximation in Theorem \ref{th:desing} can be done by embeddings 
with the same flux as the original immersion, whereas the proper conformal minimal embeddings in the first assertion 
of Theorem \ref{th:proper} can be found with any given flux; see Theorems \ref{th:desing2} \and \ref{th:proper2}. 

One of our main tools is a Mergelyan approximation theorem for conformal minimal immersions to $\r^n$ for any $n\ge 3$ 
that is also proved in the paper (see Theorem \ref{th:Mergelyan}), extending the result of Alarc\'on and L\'opez  \cite{AL1} 
which applies to $n=3$.  The proof in \cite{AL1} uses the Weierstrass representation of conformal minimal immersions 
$M\to\r^3$, and hence it does not generalize to the case $n>3$. 

Let us place Theorem  \ref{th:proper} in the context of results in the literature.
It has been known since the 1950's that every open Riemann surface embeds properly 
holomorphically in $\c^3$ \cite{Bis,Nar1,Nar2,Rem}. 
Since a holomorphic embedding is also conformal and harmonic, it follows
that every open Riemann surface carries a  proper  conformal minimal embedding into  $\r^6$.
In a different direction, Greene and Wu showed in 1975 \cite{GW} that every open 
$k$-dimensional Riemannian manifold $M^k$ admits a proper (not necessarily conformal) 
harmonic embedding into $\r^{2k+1}$; hence surfaces $(k=2)$ embed properly harmonically 
into $\r^5$. However, the image of a non-conformal harmonic map is not necessarily a minimal surface,
hence Theorem \ref{th:proper} is a refinement of their result  when $M$ is an {\em orientable} surface. 

The optimal result for immersions was obtained by
Alarc\'on and L\'opez who proved that every open Riemann surface carries a proper conformal 
minimal immersion into $\r^3$ \cite{AL1,AL-associated}.

It is well known that Theorem \ref{th:proper} fails in dimension $n=3$.
Indeed, the existence of a proper conformal minimal embedding $M\hra \r^3$ is a very restrictive 
condition on $M$ and there is a rich literature on this subject; see  the recent surveys 
\cite{MP1,MP2} and the references therein. 
Note however that every Riemann surface (open or closed) admits a smooth proper conformal
(but not necessarily minimal!) embedding into $\r^3$ according to R\"uedy \cite{Ruedy};
see also Garsia \cite{Garsia} for a partial result in this direction.

It remains an open problem whether Theorem \ref{th:proper}  holds in dimension $n=4$:

\begin{problem}\label{pr:properemb1}
Does every open Riemann surface admit a proper conformal minimal embedding into $\r^4$?
\end{problem}

Motivated by the result of Greene and Wu \cite{GW}, 
another interesting but less ambitious open question is whether every open (orientable) Riemannian surface 
admits a harmonic embedding into $\r^4$. These problems seem nontrivial also for nonproper embeddings. 

Since every holomorphic embedding is conformal and harmonic (hence minimal), 
Problem \ref{pr:properemb1} is related to the analogous long standing open problem whether every open 
Riemann surface admits a proper holomorphic  embedding into $\c^2$. 
(See the survey of Bell and Narasimhan \cite[Conjecture 3.7, p.\ 20]{BN}, 
and  \cite[Sections 8.9--8.10]{F2011}.)  For recent progress on this problem, we refer to the articles
of Forstneri\v c and Wold \cite{FW1,FW2} and the references therein. 
In particular, the following result is proved in \cite{FW1}:
{\em Let $M$ be a compact bordered Riemann surface
with nonempty boundary $bM$. If $M$ admits a (nonproper) holomorphic embedding into $\c^2$, 
then its interior $\mathring M=M\setminus bM$ admits a proper holomorphic embedding into $\c^2$.} 
The same problem makes sense for conformal minimal embeddings and it naturally appears 
as a first approach to Problem \ref{pr:properemb1}:

\begin{problem}\label{pr:properemb2}
Let $M$ be a compact bordered Riemann surface with nonempty boundary $bM$.
Assume that $M$ admits a conformal minimal embedding $M\hookrightarrow\r^4$ 
of class $\Cscr^1(M)$. Does the interior $\mathring M$ admit a proper  conformal minimal 
embedding into $\r^4$? 
\end{problem}

It was proved in \cite{FW2}  that every circular domain in
the Riemann sphere $\cp^1$ (possibly infinitely connected)  
admits a proper holomorphic embedding into $\c^2$, hence
a proper conformal minimal embedding into $\r^4$.

The constructions in \cite{FW1,FW2} use the theory of 
holomorphic automorphisms of complex Euclidean spaces.
After exposing and sending to infinity a point in each boundary component of $M$,
a proper holomorphic embedding $\mathring M\hookrightarrow\c^2$ 
is obtained by successively pushing the image of the boundary $bM$ to infinity by holomorphic automorphisms 
of $\c^2$.  Such approach does not seem viable for conformal 
minimal embeddings since this class of maps is only preserved under rigid transformations of $\r^n$. 
It is therefore a challenging problem to find suitable methods that could work for conformal minimal embeddings.

On the other hand, there are no topological obstructions to these questions since Alarc\'on and L\'opez \cite{AL-C2} 
proved that every open orientable surface admits a smooth proper embedding in $\c^2$ whose image is a complex curve. 
For bordered orientable surfaces of finite topology this was shown earlier by \v Cerne and Forstneri\v c \cite{CF}.

The results in this paper, as well as the methods used in their proofs, are influenced by the recent work 
\cite{AF2} of the first two named authors  who proved results analogous to Theorems \ref{th:desing} and \ref{th:proper} 
for a certain class of {\em directed holomorphic immersions} (including null curves) of Riemann surfaces 
into $\c^n$, $n\geq 3$. A {\em null curve} $M \to \c^n$ is a holomorphic immersion whose derivative lies 
in the punctured null quadric $\Agot^*=\Agot\setminus \{0\}\subset \c^n$;
see (\ref{eq:Agot}). The real part of a null curve is a conformal minimal immersion
$M\to \r^n$. (Obviously, the real part of an embedded null curve is not necessarily embedded.) 
These techniques exploit the close connection between minimal surfaces in $\r^n$ and
modern Oka theory in complex analysis. The most relevant point
for the global approximation results (in particular, Theorems \ref{th:desing}, \ref{th:proper} and \ref{th:Mergelyan})
is that the punctured null quadric $\Agot^*$ is an {\em Oka manifold}. 
(The simplest description of this class of complex manifolds can be found in L\'arusson's 
AMS Notices article \cite{Lar}. Roughly speaking, maps $M\to X$ from any Stein manifold $M$ 
(in particular, from any open Riemann surface) to an Oka manifold $X$ satisfy all 
forms of Oka principle. Recent expository articles on this subject are \cite{F-Oka,FL-survey};
a more comprehensive treatment can be found in the monograph \cite{F2011}.)
On the other hand, local results which pertain to properties of conformal minimal
immersions on compact bordered Riemann surfaces, such as Theorem \ref{th:local} 
(the structure theorem) and Theorem \ref{th:desingBRS} (the general position theorem),
do not require that $\Agot^*$ is Oka. This dichotomy has already been pointed out in \cite{AF2}.

The proofs of our main results follow the pattern that has been established in \cite{AF2}.  
We exhaust the Riemann surface $M$ by an increasing sequence of compact, smoothly bounded, 
Runge domains $M_1\subset M_2\subset M_3\subset \cdots$
such that for every $j\in \n$, $M_{j+1}$ deformation retracts either onto $M_j$
(the so called {\em noncritical case}), or onto $M_j\cup C_j$
where $C_j$ is a smooth embedded arc in $M\setminus M_j$ attached with endpoints to $M_j$ 
(the {\em critical case}). We proceed recursively. 
Assume inductively that we have already found a conformal minimal immersion (or embedding) $u_j\colon M_j\to \r^n$
on a neighborhood of $M_j$ for some $j\in\n$. (The initial set $M_1$ is chosen to be a small 
neighborhood of the compact set $K\subset M$ on which we wish to approximate an initially given conformal 
minimal immersion or embedding.)
We embed its derivative $\di u_j$ into a period dominating  holomorphic spray of maps into
the punctured null quadric $\Agot^*$ (see Lemma \ref{lem:deformation}).
Since $\Agot^*$ is an Oka manifold, we can approximate this spray by a spray
defined on a neighborhood of $M_{j+1}$. (In the noncritical case, this is a direct application
of the Oka property. The critical case requires additional work, using generalized conformal minimal immersions 
on admissible sets, see Definitions \ref{def:admissible} and \ref{def:generalized}. When extending $u_j$ to the arc 
$C_j$, we must ensure the correct value of the integral $\int_{C_j} \Re(\di u_j)$.)  
The period domination property ensures that the new spray still contains
an element with vanishing real periods on all closed curves in $M_{j+1}$, and hence
it integrates to a conformal minimal immersion $u_{j+1}\colon M_{j+1}\to \r^n$.
If $n\ge 5$, we can furthermore arrange that $u_{j+1}$ is an embedding by 
using a general position argument (see Theorem \ref{th:desingBRS}), similar to the one obtained 
in \cite[Theorem 2.4]{AF2} for directed holomorphic immersions $M\to \c^n$.  If the approximations
are sufficiently close at every step, then the sequence $(u_j)_{j\in\n}$ converges to a conformal
minimal immersion $u=\lim_{j\to \infty} u_j\to \r^n$ (embedding if $n\ge 5$).  
The proof of Theorem \ref{th:proper} (concerning the existence of {\em proper} conformal minimal embeddings
$M\to\r^n$ for $n\ge 5$) proceeds similarly, but uses  a more precise version 
of Mergelyan's approximation with the control of some of the component functions;
see Lemma \ref{lem:Mergelyan2}. 

A few words concerning the organization of the paper.  In Section  \ref{sec:prelim}, 
we collect the preliminary material and introduce the relevant definitions.
In Section  \ref{sec:bordered}, we prove some basic local properties of the space of 
conformal minimal immersions of bordered Riemann surfaces to $\r^n$; cf.\ Theorem \ref{th:local}.
In Section  \ref{sec:desing}, we prove the special case of Theorem \ref{th:desing}
for bordered Riemann surfaces (cf.\ Theorem \ref{th:desingBRS}).
In Section  \ref{sec:Mergelyan}, we prove a Mergelyan type approximation theorem for conformal 
minimal immersions of open Riemann surfaces into $\r^n$ for any $n\ge 3$; 
see Theorem \ref{th:Mergelyan}. By combining all these results, we then prove
Theorem \ref{th:desing} in Section \ref{sec:th1.1} (cf.\ Theorem \ref{th:desing2})
and Theorem \ref{th:proper} in Section \ref{sec:proper} (cf.\ Theorem \ref{th:proper2}).

%%%%%%%%%%
%%%%%%%%%%
%%%%%%%%%%
%%%%%%%%%%
%%%%%%%%%%
%%%%%%%%%%

\section{Notation and preliminaries} 
\label{sec:prelim}

Let $n\geq 3$ be a natural number, and let $M$ be an open Riemann surface. 
An immersion $u=(u_{1},u_{2},\ldots,u_{n}):M\to \r^n$ is {\em conformal}
(angle preserving) if and only if, in any local holomorphic coordinate $z=x+\imath y$ on $M$, 
the partial derivatives $u_x=(u_{1,x},\ldots,u_{n,x})\in \r^n$ and $u_y=(u_{1,y},\ldots,u_{n,y})\in \r^n$
have the same Euclidean length and are orthogonal:
\begin{equation}\label{eq:conformal} 
	|u_x|=|u_y|>0, \qquad u_x\cdotp u_y=0.
\end{equation}
Equivalently,  $u_x \pm \imath u_y\in \c^n\setminus\{0\}$ are {\em null vectors}, i.e., they 
belong to the {\em null quadric}
\begin{equation}
\label{eq:Agot}
	\Agot=\{z=(z_1,z_2,\ldots,z_n)\in\c^n\colon z_1^2+z_2^2+\cdots + z_n^2=0\}.
\end{equation}

Since $\Agot$ is defined by a homogeneous quadratic holomorphic equation 
and is smooth away from the origin, the punctured null quadric  
$\Agot^*=\Agot\setminus\{0\}=\Agot_{\reg}$ is an {\em Oka manifold} 
(see Example 4.4 in \cite[p.\ 743]{AF2}). This means that:

\begin{remark}\label{rem:Oka}
Maps $M\to \Agot^*$ from any Stein manifold (in particular, from any open Riemann surface) 
satisfy  all forms of the {\em Oka principle} \cite[Theorem 5.4.4]{F2011}. 
\end{remark}

The exterior derivative on $M$ splits into the sum $d=\di+\dibar$ of the $(1,0)$-part $\di$
and the $(0,1)$-part $\dibar$. In any local holomorphic coordinate $z=x+\imath y$ on $M$ we have 
\begin{equation}\label{eq:di-u} 
	2\di u= (u_x - \imath u_y)dz, \quad 2\dibar u= (u_x + \imath u_y)d\bar z.
\end{equation}
Hence (\ref{eq:conformal}) shows that $u$ is conformal  if and only  if  
the differential $\di u=(\di u_1,\ldots,\di u_n)$ satisfies the nullity condition
\begin{equation}\label{eq:sumuzero}
	 (\di u_1)^2 + (\di u_2)^2 + \cdots + (\di u_n)^2 =0.
\end{equation}

Assume now that $M$ is an open Riemann surface and 
$u\colon M\to \r^n$ is a conformal immersion. It is classical (cf.\ Osserman \cite{Osserman}) that 
$\Delta u =2 \mu \mathbf H$, where $\mathbf H\colon M\to\r^n$ is the mean curvature vector of $u$
and $\mu>0$ is a positive function. (In local isothermal coordinates $x+\imath y$ on $M$ 
we have $\mu=||u_x||^2=||u_y||^2$.)
Hence $u$ is minimal ($\mathbf H=0$)  if and only if it is harmonic  ($\Delta u=0$). 
If $v$ is any local harmonic conjugate of $u$ on $M$, then the Cauchy-Riemann equations imply that 
\[
	\di(u+\imath v) = 2\di u = 2\imath\, \di v.
\]
In particular, the differential $\di u$ of any conformal minimal immersion is a $\c^n$-valued 
holomorphic 1-form satisfying (\ref{eq:sumuzero}).

The {\em conjugate differential} of a smooth map $u:M\to\r^n$ is defined by  
\[
	d^c u= \imath(\dibar u - \di u) = 2 \Im (\di u).  
\]
We have that
\[
	2\di u = du + \imath d^c u, \quad 
	dd^c u= 2\imath\,  \di\dibar u = \Delta u\cdotp  dx\wedge dy.
\] 
Thus $u$ is harmonic if and only if $d^c u$ is a closed vector valued $1$-form on $M$, 
and $d^c u=dv$ holds for any local harmonic conjugate $v$ of $u$. 

The {\em flux map}  of a harmonic map $u\colon M \to\r^n$  
is the group homomorphism $\Flux_u\colon H_1(M;\z)\to\r^n$ given by
\begin{equation} \label{eq:flux}
	\Flux_u([C])=\int_C d^c u, \qquad 	[C]\in H_1(M;\z).
\end{equation}
The integral on the right hand side is independent of the choice of path in a given homology class, 
and we shall write $\Flux_u(C)$ for $\Flux_u([C])$ in the sequel.

Fix a nowhere vanishing holomorphic $1$-form $\theta$ on $M$.  
(Such exists by the Oka-Grauert principle, cf.\ Theorem 5.3.1 in \cite[p.\ 190]{F2011}.)
It follows from (\ref{eq:sumuzero}) that 
\begin{equation}\label{eq:di-u2}
	2\di u= f \theta
\end{equation}	
where $f=(f_1,\ldots,f_n)\colon M\to \Agot^*$ is a holomorphic map satisfying 
\[
	\int_C \Re (f\theta)= \int_C du=0\quad \text{for any closed curve $C$ in $M$}.
\]
Conversely, associated to any holomorphic map $f\colon M\to\c^n$ is the {\em period homomorphism} 
$\Pcal(f)\colon H_1(M;\z) \to \c^n$ defined on any closed curve $C\subset M$ by 
\[
	\Pcal(f)(C) = \int_{C} f\theta. 
\]
The map $f$ corresponds to a conformal minimal immersion $u\colon M\to\r^n$ as in (\ref{eq:di-u2})
if and only if $f(M)\subset \Agot^*$ and $\Re(\Pcal(f))=0$; in this case,  
$u(x) =  \int^x \Re(f\theta)$ $(x\in M)$ and
\begin{equation}\label{eq:FP}
	\Flux_u = \Im (\Pcal(f)) \colon H_1(M;\z) \to \r^n. 
\end{equation}

In view of Remark \ref{rem:Oka}, the above discussion connects the theory of minimal surfaces in $\r^n$ 
to the theory of Oka manifolds.
For the latter, see the expository articles \cite{F-Oka,FL-survey} and the monograph \cite{F2011}.

Next, we introduce the mapping spaces that will be used in the paper. 

If $M$ is an open Riemann surface, then $\Oscr(M)$ is the algebra of holomorphic functions  $M\to\c$, 
$\Oscr(M,X)$ is the space of  holomorphic mappings $M\to X$ to a complex manifold $X$, 
and $\CMI(M,\r^n)$  is the set of conformal minimal immersions $M\to \r^n$. 
These spaces are endowed with the compact-open topology.

If $K$ is a compact subset of $M$, then $\Oscr(K)$ denotes 
the set of all holomorphic functions on open neighborhoods of $K$ in $M$
(in the sense of germs on $K$). A compact set $K\subset M$
is said to be {\em $\Oscr(M)$-convex} 'if $K$ equals its holomorphically convex hull 
\[
	\wh K=\{x\in M\colon |f(x)| \le \sup_K |f|\ \ \forall f\in \Oscr(M)\}. 
\]
If $M$ is an open Riemann surface, then by Runge's theorem $K=\wh K$ if and only if $M\setminus K$ 
does not contain any relatively compact connected components, and this holds precisely when 
every function $f\in\Oscr(K)$ is the uniform limit on $K$ of functions in $\Oscr(M)$; 
for this reason, such $K$ is also  called a {\em Runge set} in $M$. 
(See e.g.\ \cite{Hormander} for these classical results.)

Assume now that $M$ is a {\em compact bordered Riemann surface}, i.e., a  compact connected Riemann surface with 
smooth boundary $\emptyset \ne bM \subset M$ and interior $\mathring M=M\setminus bM$. 
Let $g\ge 0$ be the genus of $M$ and $m\ge 1$ the number of its boundary components of $M$. 
The first homology group $H_1(M;\z)$ is then a free abelian group on $l=2g+m-1$ generators
whose basis  is given by smoothly embedded loops $\gamma_1,\ldots,\gamma_l\colon \s^1 \to \mathring M$ that only 
meet at a chosen base point $p\in \mathring M$. (Here, $\s^1$ denotes the circle.) 
Let $C_j=\gamma_j(\s^1)\subset M$ denote the trace of $\gamma_j$. 
The union $C=\bigcup_{j=1}^l C_j$ is then a wedge of $l$ circles with vertex $p$. 

Given $r\in \z_+$,  we denote by $\Ascr^r(M)$ the space of all functions
$M\to \c$ of class $\Cscr^r(M)$ that are holomorphic in $\mathring M$. 
More generally, for any complex manifold $X$ we let $\Ascr^r(M,X)$ denote the space 
of maps $M\to X$ of class $\Cscr^r$ which are holomorphic in $\mathring M$.
We write $\Ascr^0(M)=\Ascr(M)$ and $\Ascr^0(M,X)=\Ascr(M,X)$. 
Note that $\Ascr^r(M,\c^n)$ is a complex Banach space, and for any complex manifold $X$ the space $\Ascr^r(M,X)$ 
is a complex Banach manifold modeled on $\Ascr^r(M,\c^n)$ with $n=\dim X$ (see \cite[Theorem 1.1]{F2007}). 

For any $r\in \n$ we denote by $\CMI^r(M,\r^n)$ the set of all  conformal minimal immersions $M\to\r^n$ 
of class $\Cscr^r(M)$. More precisely, an immersion $F\colon M\to \r^n$ of class $\Cscr^r$ belongs to 
$\CMI^r(M,\r^n)$ if and only if $\di F$ is a $(1,0)$-form of class $\Cscr^{r-1}(M)$ that has
range in the punctured null quadric $\Agot^*$ (\ref{eq:Agot}) and is holomorphic in the interior 
$\mathring M=M\setminus \partial M$. We write $\CMI^1(M,\r^n)=\CMI(M,\r^n)$.

A compact bordered Riemann surface $M$ can be considered as a smoothly bounded compact domain 
in an open Riemann surface $R$. It is classical that each function in $\Ascr^r(M)$ can be approximated 
in the $\Cscr^r(M)$ topology by functions in $\Oscr(M)$. 
The same is true for maps to an arbitrary complex manifold or complex space
(see \cite[Theorem 5.1]{DF}).

The following notions will play an important role in our analysis.

%
%
%    Nondegenerate maps
%
%
\begin{definition}
\label{def:nondegenerate}
Let $M$ be a connected open or bordered Riemann surface, let $\theta$ be a nowhere vanishing holomorphic $1$-form on $M$, 
and let $\Agot$ be the null quadric (\ref{eq:Agot}).
\begin{itemize}
\item A holomorphic map $f:M\to \Agot^*$ is said to be {\em nonflat} if the image 
$f(M)$ is not contained in any complex ray $\c\nu\subset \Agot$ of the null quadric. 
A conformal minimal immersion $u\colon M\to \r^n$ is nonflat if 
the map $f=2\di u/\theta: M\to \Agot^*$ is nonflat, or equivalently,  if the image $u(M)$ 
is not contained in an affine plane. 

\vspace{1mm}

\item A holomorphic map $f\colon M\to \Agot^*$ is {\em nondegenerate} if the image 
$f(M)\subset\Agot^*$ is not contained in any linear complex hyperplane of $\c^n$. 
A conformal minimal immersion $u\colon M\to \r^n$ is nondegenerate 
if the map $f=2\di u/\theta \colon M\to \Agot^*$ is. 

\vspace{1mm}

\item  A conformal minimal immersion $u\colon M\to \r^n$ is {\em full} if the image $u(M)$ is not contained 
in an affine hyperplane. 
\end{itemize}
\end{definition}

For a conformal minimal immersion $M\to \r^3$, nonflat, full, and nondegenerate are equivalent notions. 
However, in dimensions $n>3$  we have
\begin{equation}\label{eq:implications}
	\text{Nondegenerate $\Longrightarrow$ Full $\Longrightarrow$ Nonflat},
\end{equation}
but the converses are not true  (see \cite{Osserman}).

If $M$ is an open Riemann surface, we denote by $\CMI_*(M,\r^n)$
(resp.\ $\CMI_{\rm nf}(M,\r^n)$ the subset of $\CMI(M,\r^n)$ consisting of all immersions which are nondegenerate 
(resp.\ nonflat) on every connected component of $M$.
By \eqref{eq:implications}, we have 
\[
	\CMI_*(M,\r^n)\subset \CMI_{\rm nf}(M,\r^n).
\]
The analogous notation $\CMI_*^r(M,\r^n)\subset \CMI_{\rm nf}^r(M,\r^n) \subset \CMI^r(M,\r^n)$
is used for a compact bordered Riemann surface $M$ and $r\in \n$.
Note that $\CMI_*^r(M,\r^n)$ and $\CMI_{\rm nf}^r(M,\r^n)$ are open subsets
of  $\CMI^r(M,\r^n)$.

Since the tangent space $T_z\Agot$ is the kernel at $z$ of the $(1,0)$-form $\sum_{j=1}^n z_j\, dz_j$, 
we have $T_z\Agot = T_w\Agot$ for $z,w\in\c^n\setminus\{0\}$ if and only if $z$ and $w$ are colinear.
This implies

\begin{lemma}\label{lem:nonflat}
A holomorphic map $f\colon M\to \Agot^*$ is nonflat if and only if the linear span of the tangent spaces $T_{f(x)} \Agot$ 
over all points $x\in M$ equals $\c^n$.
\end{lemma}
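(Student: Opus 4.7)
The plan is to exploit the observation noted just before the lemma: in terms of the nondegenerate symmetric bilinear form $B(u,v)=\sum_{j=1}^n u_j v_j$ on $\c^n$, one has $T_z\Agot = z^{\perp_B}$ for every $z\in\Agot^*$. Since $B$ is nondegenerate, the assignment $z\mapsto z^{\perp_B}$ descends to a bijection between complex lines through the origin in $\c^n$ and hyperplanes in $\c^n$; equivalently, $T_z\Agot = T_w\Agot$ if and only if $z$ and $w$ are colinear, as already recorded in the text. This is also why one must work with $\Agot^*=\Agot\setminus\{0\}$ rather than $\Agot$: the tangent hyperplane $T_z\Agot$ is only defined at smooth points.

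For the forward direction, I would assume $f$ is nonflat and set $V=\span_{x\in M} T_{f(x)}\Agot\subseteq\c^n$. Suppose, toward a contradiction, that $V$ is a proper subspace. Then $V$ lies in some hyperplane $H\subset\c^n$; but each $T_{f(x)}\Agot$ is itself a hyperplane, so necessarily $T_{f(x)}\Agot=H$ for every $x\in M$. By the correspondence above, fixing any $x_0\in M$ and writing $\nu:=f(x_0)\in\Agot^*$, every value $f(x)$ must be a scalar multiple of $\nu$, i.e.\ $f(M)\subset\c\nu$. Since $\Agot$ is a cone through the origin and $\nu\in\Agot$, we have $\c\nu\subset\Agot$, contradicting the nonflatness of $f$.

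The converse I would prove by contrapositive: if $f$ is flat, write $f(M)\subset\c\nu$ for some $\nu\in\Agot^*$. Since $f$ avoids the origin, $f(x)=\lambda(x)\nu$ with $\lambda(x)\in\c\setminus\{0\}$ for each $x\in M$, hence $T_{f(x)}\Agot=f(x)^{\perp_B}=\nu^{\perp_B}$ is one fixed hyperplane and $V\subseteq\nu^{\perp_B}\subsetneq\c^n$. No serious obstacle is anticipated: the whole argument is essentially a direct unpacking of the tangent-space description of $\Agot$ together with its cone structure, together with the fact that $B$ is nondegenerate so that $z^{\perp_B}$ determines $z$ up to scalar.
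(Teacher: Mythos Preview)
Your proof is correct and is essentially the same as the paper's: both rest on the observation that $T_z\Agot=z^{\perp_B}$, so that $T_z\Agot=T_w\Agot$ iff $z$ and $w$ are colinear, from which the equivalence follows immediately. You have merely written out in full the ``This implies'' that the paper leaves to the reader.
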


\begin{remark}\label{ref:flat-nondeg}
The second condition in Lemma \ref{lem:nonflat} was used as the definition of a nondegenerate map in 
\cite[Definition 2.2, p.\ 736]{AF2}. This property enables the construction of period dominating sprays of holomorphic maps
$M\to\Agot^*$ with the given core map $f$ (cf.\ Lemma \ref{lem:deformation}).  
Here, we revert to the standard terminology used in 
minimal surface theory as given by Definition \ref{def:nondegenerate}.
\end{remark}

%%%%%%%%%%
%%%%%%%%%%
%%%%%%%%%%
%%%%%%%%%%   
%
%
%  BORDERED RIEMANN SURFACES
%
%
%%%%%%%%%%
%%%%%%%%%%
%%%%%%%%%%  

\section{Conformal minimal immersions of bordered Riemann surfaces}\label{sec:bordered}

The following result gives some basic local properties of the space of conformal minimal
immersions of a bordered Riemann surface to $\r^n$. It is analogous to Theorem 2.3 in
\cite{AF2} where similar properties were proved for certain classes of directed 
holomorphic immersions; in particular, for null holomorphic immersions $M\to\c^n$ 
for any $n\ge 3$.

%
%
%   LOCAL PROPERTIES OF CMI'S
%
%
\begin{theorem} 
\label{th:local}
Let $M$ be a compact bordered Riemann surface with nonempty boundary $bM$,
and let $n\ge 3$ and $r\ge 1$ be integers.  
\begin{itemize}
\item[\rm(a)]
	Every conformal minimal immersion $u\in \CMI^r(M,\r^n)$ can be approximated in the 
	$\Cscr^r(M)$ topology by nondegenerate conformal minimal immersions 
	$\tilde u\in \CMI^r_*(M,\r^n)$ satisfying $\Flux_u=\Flux_{\tilde u}$.
\vspace{1mm}
\item[\rm (b)] 
	Each of the spaces $\CMI_*^r(M,\r^n)$ and $\CMI_{\rm nf}^r(M,\r^n)$
	is a real analytic Banach manifold with the natural $\Cscr^r(M)$ topology.
\vspace{1mm}
\item[\rm (c)] 
	If $M$ is a smoothly bounded compact domain in a Riemann surface $R$, then every 
	$u\in \CMI^r(M,\r^n)$ can be appproximated in the $\Cscr^r(M)$
	 topology by conformal minimal immersions defined on open neighborhoods of $M$ in $R$.
\end{itemize}
\end{theorem}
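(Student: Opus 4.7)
The three parts all rest on the standard passage from $u \in \CMI^r(M,\r^n)$ (modulo translations) to its holomorphic derivative map $f = 2\di u/\theta \colon M \to \Agot^*$, which is of class $\Cscr^{r-1}(M)$, holomorphic in $\mathring M$, and satisfies $\Re \Pcal(f) = 0$, where $\Pcal$ is the period homomorphism on $H_1(M;\z) \cong \z^l$; the flux is then recovered as $\Flux_u = \Im \Pcal(f)$. All perturbations will be carried out on $f$ and organized through the period-dominating holomorphic sprays $\Phi(\cdot,t) \colon M \to \Agot^*$, $t$ in an open ball in $\c^N$, that Lemma \ref{lem:deformation} supplies around any nonflat core map.

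For (a), I would first perturb $u$ slightly within $\CMI^r(M,\r^n)$, preserving the flux, so as to make it nonflat; since flatness forces $f$ to take values in a single complex ray of $\Agot$, which is a nowhere-dense condition, a direct one-parameter deformation of $f$ along a tangent direction to $\Agot^*$ transverse to that ray, corrected with a standard period fix, suffices. Given nonflat $u$, form a period-dominating spray $\Phi(\cdot,t)$ with $\Phi(\cdot,0) = f$: by period domination $t \mapsto \Pcal(\Phi(\cdot,t))$ is a holomorphic submersion at $0$, so the implicit function theorem provides a complex submanifold $\Sigma \ni 0$ along which $\Pcal$ stays equal to $\Pcal(f)$, and each $\Phi(\cdot,t)$, $t\in\Sigma$, integrates to a conformal minimal immersion with the same flux. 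Within $\Sigma$, for each complex hyperplane $H \subset \c^n$ the set $\{t \in \Sigma : \Phi(M,t) \subset H\}$ is the zero locus of a nontrivial holomorphic function, so a Baire category argument yields nondegenerate parameters arbitrarily close to $0$, and integration produces the required $\tilde u$. The main technical point is this last density claim: after cutting $\c^N$ down to the period-preserving slice, one must verify that the spray still moves $f$ in enough directions along $\Agot^*$ to escape every fixed hyperplane, which ultimately reduces to the nonflatness of the core $f$.

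Part (b) will use the same spray as a local chart around any fixed $u_0$. I enlarge the finite-dimensional spray to an infinite-dimensional parametrization, acting on $f_0$ by perturbations $h \in \Ascr^{r-1}(M,\c^n)$ that are tangent to $\Agot^*$ along $f_0$ and are retracted back to $\Agot^*$ via a local holomorphic retraction of a tubular neighborhood. The real analytic map $h \mapsto \Re \Pcal(f_0 + h)$ is then a submersion onto $\r^{nl}$ at $h = 0$ by period domination, so its zero locus is a real analytic Banach submanifold of $\Ascr^{r-1}(M,\c^n)$. Integration converts this into a real analytic chart on $\CMI^r_{\rm nf}(M,\r^n)$ near $u_0$; transition maps between such charts are real analytic because they are composed from holomorphic retractions and period corrections. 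The argument for $\CMI^r_*(M,\r^n)$ is identical since both nonflatness and nondegeneracy are open conditions sufficing to produce period-dominating sprays.

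For (c), view $M$ as a smoothly bounded compact domain in an open Riemann surface $R$. Since $\Agot^*$ is Oka, Mergelyan approximation for $\Agot^*$-valued maps on $M$ (cf.\ \cite[Theorem 5.1]{DF}) provides $\tilde f \in \Oscr(V,\Agot^*)$ on a neighborhood $V \supset M$ in $R$ approximating $f$ in $\Cscr^{r-1}(M)$. The periods $\Pcal(\tilde f)$ need not equal $\Pcal(f)$, but wrapping $\tilde f$ in its own period-dominating spray (available since $\tilde f$ is nonflat for sufficiently close approximation) and solving $\Pcal(\Phi(\cdot,t)) = \Pcal(f)$ for a small $t$ via the implicit function theorem corrects this on a slightly smaller neighborhood $V' \subset V$; integrating $\Re(\Phi(\cdot,t)\theta)$ yields the desired conformal minimal immersion on $V'$ approximating $u$ in $\Cscr^r(M)$.
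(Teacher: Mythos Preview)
Your treatments of (b) and (c) are essentially the paper's: period domination from Lemma~\ref{lem:deformation} yields submersivity of the period map on $\Ascr^{r-1}(M,\Agot^*)$, the implicit function theorem gives the real analytic Banach manifold structure, and for (c) one approximates $f$ by a map holomorphic on a neighborhood of $M$ (the paper cites \cite{DF2008}) and then corrects periods inside the spray. One small omission in (c): you must first reduce to nonflat $u$, since the spray is only available then; the paper does this by invoking (a).

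Part (a), however, has a real gap. First, the bootstrap from flat to nonflat is circular as written: Lemma~\ref{lem:deformation} applies only to nonflat $f$ (it needs the tangent spaces $T_{f(x)}\Agot$ to span $\c^n$, which fails precisely when $f$ is flat), so the ``standard period fix'' you invoke is not available at that stage. More seriously, after restricting the finite-dimensional spray to the period-preserving slice $\Sigma$, your Baire argument does not work: hyperplanes form the uncountable family $\c P^{n-1}$, so knowing that $\{t\in\Sigma:\Phi(M,t)\subset H\}$ is a proper analytic subset for each fixed $H$ does not imply that their union is proper. Passing to the incidence variety in $\Sigma\times\c P^{n-1}$ and applying the proper mapping theorem shows at best that the degenerate locus $D\subset\Sigma$ is analytic; to conclude $D\ne\Sigma$ one must still exhibit a nondegenerate parameter in $\Sigma$, which is the whole point. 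And Lemma~\ref{lem:deformation} only guarantees $N\ge nl$, so $\Sigma$ could even be zero-dimensional.

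The paper avoids all of this by a direct construction that does not go through Lemma~\ref{lem:deformation}. If $f(M)$ spans a $k$-plane $\Pi\subsetneq\c^n$, one chooses points $p_1,\dots,p_k,q_1,\dots,q_{n-k}\in M$ with $f(p_1),\dots,f(p_k)$ a basis of $\Pi$, a function $h\in\Ascr(M)$ with $h(p_i)=0$ and $h(q_j)=1$, and a single holomorphic vector field $V$ tangent to $\Agot$ such that $\{f(p_i)\}\cup\{V(f(q_j))\}$ is a basis of $\c^n$. One then flows $f$ along $V$ for time $g(x)h(x)$ with $g\in\Ascr(M)$. Because $\Ascr(M)$ is infinite-dimensional and the period constraint $P(\Phi(g))=P(f)$ is finitely many conditions, nonzero $g$ satisfying it exist arbitrarily close to $0$; since $h(p_i)=0$ the values $f(p_i)$ are preserved while near the $q_j$ the image is pushed into the $V(f(q_j))$ directions, so generically the resulting map is nondegenerate. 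This handles the flat case ($k=1$) and all degenerate cases uniformly, with no category or dimension-count argument.
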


\begin{proof}
Fix a nowhere vanishing holomorphic $1$-form $\theta$ on $M$.
Choose a basis $\{\gamma_j\}_{j=1}^l$ of the homology group $H_1(M;\z)$ and denote by 
\[
	P=(P_1,\ldots, P_l)\colon \Ascr(M,\c^n)\to (\c^n)^l
\]
the period map whose $j$-th component, applied to $f\in \Ascr(M,\c^n)$, equals 
\begin{equation}
\label{eq:P}
	P_j(f) = \int_{\gamma_j} f\theta = 
	\int_0^1 f(\gamma_j(t))\, \theta(\gamma_j(t),\dot{\gamma_j}(t))\, dt \in \c^n.
\end{equation}

%
%    PROOF OF PART A
%
\noindent {\em Proof of (a).} 
For simplicity of notation we assume that $r=1$; the same proof applies for any $r\in\n$.

Let $u\colon M\to \r^n$ be a degenerate conformal minimal immersion.
The map $f=2\di u/\theta \colon M\to \Agot^*$ is continuous on $M$ and 
holomorphic in $\mathring M$, and the linear span of $f(M)$ is a $k$-dimensional linear complex subspace 
$\Pi\subset \Agot$, $1\leq k<n$. 
Fix distinct points $\{p_1,\ldots,p_k,q_1,\ldots,q_{n-k}\}\in M$ such that $\{f(p_1),\ldots,f(p_k)\}$ is a basis of $\Pi$.  
Choose a nonconstant  function $h\in \Ascr(M)$ such that $h(p_i)=0$ for all $i=1,\ldots,k$, and $h(q_j)=1$ 
for all $j=1,\ldots,n-k$.  Choose a holomorphic vector field $V$ on $\c^n$ tangential to $\Agot$
such that $\{f(p_1),\ldots,f(p_k),V(f(q_1)),\ldots,V(f(q_{n-k}))\}$ is a basis of $\c^n$.
Let $t\mapsto \phi(t,z)$ denote the flow of $V$ for small complex values of time $t$, with $\phi(0,z)=z$.
For any $g\in \Ascr(M)$ near the zero function we define the map 
$\Phi(g)\in \Ascr(M,\Agot^*)$ by 
\[
	\Phi(g)(x)= \phi(g(x)h(x), f(x)),\quad x\in M.
\]
Clearly $\Phi(0)=f$. Consider the holomorphic map
\[
	\Ascr(M) \ni g \longmapsto P(\Phi(g)) \in (\c^n)^l.
\]
Since the space $\Ascr(M)$ is infinite dimensional, 
there is a function $g\in \Ascr(M)\setminus \{0\}$ arbitrarily close to
the zero function such that $P(\Phi(g))=P(\Phi(0))= P(f)$; in particular, $\Re P(\Phi(g))=0$.
For such $g$,  the map $\tilde f = \Phi(g) \colon M\to \Agot^*$ 
integrates to a conformal minimal immersion 
$\tilde u(x) = u(p)+\int_p^x \Re(\tilde f\theta)$ that is close to $u$ and satisfies $\Flux_u=\Flux_{\tilde u}$.
For a generic choice of points $q_j'\in M$ near $q_j$, $j=1,\ldots,n-k$, we have that $g(q_j')h(q_j')\ne 0$ 
and $\{f(p_1),\ldots,f(p_k),V(f(q_1')),\ldots,V(f(q_{n-k}'))\}$ is a basis of $\c^n$.
Hence, $\{f(p_1),\ldots,f(p_k),\tilde f(q_1'),\ldots,\tilde f(q_{n-k}')\}$ is a basis of $\c^n$
provided that $g(q_j')\in \c^*$ is close enough to $0$ for every $j=1,\ldots,n-k$. 
Since $g(p_i)h(p_i)=0$, we also have that $\tilde f(p_i)= f(p_i)$ for $i=1,\ldots,k$, and 
$\tilde f$ is nondegenerate.  This proves part (a). 

In the proof of part (b), we shall need the following version of  \cite[Lemma 5.1]{AF2}. 

%
%   Lemma 5.1 from the Inventiones paper
%
\begin{lemma} 
\label{lem:deformation}
Let $r\in\z_+$, and let $f\in \Ascr^r(M,\Agot^*)$ be a nonflat map.
There exist an open neighborhood $U$ of the origin in the Euclidean space 
$\c^N$ for some $N\in\n$ and a map 
\[
	U\times M\ni (\zeta,x) \longmapsto \Phi_f(\zeta,x)\in \Agot^*
\] 
of class $\Ascr^r(U\times M,\Agot^*)$ such that $\Phi_f(0,\cdotp)=f$ 
and the period map $U\ni \zeta\mapsto P(\Phi_f(\zeta,\cdotp)) \in (\c^n)^l$ (\ref{eq:P})
is submersive at $\zeta=0$.  Furthermore, there is a neighborhood $V$ of $f$ in 
$\Ascr^r(M,\Agot^*)$ such that the map $V\ni h \mapsto \Phi_{h}$ can be chosen
holomorphic in $h$.
\end{lemma}

The cited result  \cite[Lemma 5.1]{AF2} is stated for the case when $P(f)=0$, $r=0$, and $f\colon M\to \Agot^*$ is 
nondegenerate as opposed to nonflat, the latter being a weaker condition when $n>3$; see \eqref{eq:implications}.
(In fact, \cite[Lemma 5.1]{AF2} applies to more general conical subvarieties of $\c^n$.)
However, the proof given there applies in the present situation since 
it only uses the condition that the linear span of the tangent spaces $T_{f(x)} \Agot$ over all 
$x\in M$ equals $\c^n$. By Lemma \ref{lem:nonflat}, this holds if and only if the map 
$f\colon M\to \Agot^*$ is nonflat. See also Remark \ref{ref:flat-nondeg}.

A holomorphic family of maps $\Phi_f(\zeta,\cdotp)\colon M\to \Agot^*$ $(\zeta\in U\subset\c^N)$
in Lemma \ref{lem:deformation} is called a {\em period dominating spray
with core $\Phi_f(0,\cdotp) = f$ and with values in $\Agot^*$}.
In \cite[Proof of Lemma 5.1]{AF2} it was shown that there is a spray with these properties
given by 
\begin{equation} \label{eq:Phi}
	\Phi_f(\zeta,x) = \Phi(\zeta,x,f(x)) \in \Agot^*,
\end{equation}
where $\Phi\colon U\times M\times \Agot\to \Agot$ is a holomorphic map of the form
\begin{equation}\label{eq:Phi2}
	\Phi(\zeta,x,z) = \phi^1_{\zeta_1g_1(x)} \circ \cdots  \circ \phi^N_{\zeta_N g_N(x)} (z) \in \Agot,
\end{equation}
where $z\in \Agot$, $x \in M$, $\zeta=(\zeta_1,\ldots,\zeta_N)\in U\subset \c^N$,
$g_1,\ldots,g_N$ are holomorphic functions on $M$, and $\phi^j_t$
is the flow of a holomorphic vector field $V_j$ on $\c^n$ that is tangential to $\Agot$.

%
%   PROOF OF PART B
%
\medskip \noindent {\em Proof of (b).} 
By \cite[Theorem 1.1]{F2007}, the space $\Ascr^{r-1}(M,\Agot^*)$ is a 
complex Banach manifold modeled on the complex Banach space 
$\Ascr^{r-1}(M,\c^{n-1})$ (since $\dim \Agot^*=n-1$). Set 
\[
	\Ascr_0^{r-1}(M,\Agot^*)=\{f\in \Ascr^{r-1}(M,\Agot^*): \Re(P(f))=0\},
\]
where $P\colon \Ascr^{r-1}(M,\c^n)\to (\c^n)^l$ is the (holomorphic) period map (\ref{eq:P}). 
Let $\Ascr_{0,*}^{r-1}(M,\Agot^*)$ denote the open subset of $\Ascr_0^{r-1}(M,\Agot^*)$ consisting of all
nondegenerate maps (see Definition \ref{def:nondegenerate}). Since nondegenerate maps are nonflat,
Lemma \ref{lem:deformation} implies that the differential $dP_{f_0}$ of the restricted 
period map $P\colon \Ascr^{r-1}(M,\Agot^*)\to (\c^n)^l$ at any point 
$f_0\in \Ascr_{0,*}^{r-1}(M,\Agot^*)$ has maximal rank equal to $ln$. 
By the implicit function theorem, it follows that $f_0$ admits an open neighborhood 
$\Omega \subset \Ascr^{r-1}(M,\Agot^*)$ such 
$\Omega\cap \Ascr_0^{r-1}(M,\Agot^*)=\Omega \cap \Ascr_{0,*}^{r-1}(M,\Agot^*)$ 
is a real analytic Banach submanifold of $\Omega$ which is parametrized by the kernel of the 
real part $\Re(dP_{f_0})$ of the differential of $P$ at $f_0$;
this is a real codimension $ln$ subspace of the complex Banach space $\Ascr^{r-1}(M,\c^{n-1})$
(the tangent space of the complex Banach manifold $\Ascr^{r-1}(M,\Agot^*)$).
This shows that $\Ascr_{0,*}^{r-1}(M,\Agot^*)$ is a real analytic Banach manifold. 
The integration $x\mapsto v+\int_p^x \Re(f\theta)$ $(x\in M)$, with an arbitrary choice 
of the initial value $v\in \r^n$ at a chosen base point $p\in M$, provides an isomorphism between the 
Banach manifold  $\Ascr_{0,*}^{r-1}(M,\Agot^*) \times \r^n$  and the space $\CMI^r_*(M,\r^n)$, 
so the latter is also a Banach manifold. The same argument applies to the space $\CMI^r_{\rm nf}(M,\r^n)$
of nonflat maps. This completes the proof of part (b).  

%
%   PROOF OF PART C
%
\medskip \noindent {\em Proof of (c).} 
Without loss of generality, we may assume that $M$ is connected.
Let $u\in \CMI^r(M,\r^n)$. By part (a) we may assume that $u$ is nondegenerate. % , hence nonflat. 
Write $2\di u=f\theta$, where $f\colon M\to \Agot^*$ is a nondegenerate holomorphic map. 
Let $\Phi_f$ be a period dominating spray of 
conformal minimal immersions with the core $f$, furnished by Lemma \ref{lem:deformation}. 
By \cite[Theorem 1.2]{DF2008}, we can approximate $f$ uniformly on $M$ by holomorphic maps 
$\tilde  f\colon V\to \Agot^*$ defined on an open neighborhood $V$ of $M$ 
in $R$. The associated spray $\Phi_{\tilde f}$ is then defined and 
holomorphic on a neighborhood $\wt U\times \wt V \subset \c^N\times R$ 
of $\{0\}\times M$. If $\tilde f$ is sufficiently uniformly close to $f$ on $M$, 
then the domain and the range of the period map $P(\Phi_{\tilde f})$   
are so close to those of $P(\Phi_f)$ that the range of $P(\Phi_{\tilde f})$ 
contains the point $P(f)\in \c^{ln}$. (Note that the components of $P(f)$ are purely imaginary
since $f$ corresponds to a conformal minimal immersion.)
Hence $\tilde f$ can be approximated in $\Cscr^{r-1}(M)$ 
by a holomorphic map $h \in \Oscr(W,\Agot^*)$ on a connected open 
neighborhood $W\subset R$ of $M$ satisfying $P(h)=P(f)$; in particular, 
$\Re(P(h))=0$. The integral $\tilde u(x)=u(p)+\int_p^x \Re(h\, \theta)$ is then 
a conformal minimal immersion in a neighborhood of $M$ in $R$
which approximates $u$ in $\Cscr^r(M,\r^n)$.
\end{proof}

%%%%%%%%%%%%%%%%%%%%%%%%%%%%%%%%%%%%%%%%%%%%%
%															              %
%   SECTION: APPROXIMATION BY CONFORMAL MINIMAL EMBEDDINGS                %
%																      %
%%%%%%%%%%%%%%%%%%%%%%%%%%%%%%%%%%%%%%%%%%%%%
%
\section{Desingularizing conformal minimal immersions}
\label{sec:desing}

In this section we prove the following general position theorem (a special case  of 
Theorem \ref{th:desing}) for conformal minimal immersions of bordered Riemann surfaces.

%
%
%  GENERAL POSITION, BORDERED RIEMANN SURFACES
%
%
\begin{theorem}
\label{th:desingBRS}
Let $M$ be a compact bordered Riemann surface and let $n\ge 5$ and $r\ge 1$ be integers.
Every conformal minimal immersion $u\in  \CMI^r(M,\r^n)$  can be approximated
arbitrarily closely in the $\Cscr^r(M)$ topology by a conformal minimal embedding 
$\tilde u \in  \CMI^r(M,\r^n)$ satisfying $\Flux_{\tilde u}=\Flux_u$.
\end{theorem}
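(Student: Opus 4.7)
The plan is to follow the general position scheme of \cite[Theorem 2.4]{AF2}, adapted to conformal minimal immersions. First, using Theorem \ref{th:local}(a), I would reduce to the case where $u\in\CMI^r_*(M,\r^n)$ is nondegenerate (hence in particular nonflat), since the set of embeddings we seek is open in $\Cscr^r(M)$ and nondegenerate immersions are dense; moreover the reduction preserves the flux. Write $2\di u=f\theta$ with $f\colon M\to\Agot^*$ nondegenerate, and fix a basis $\{\gamma_j\}_{j=1}^l$ of $H_1(M;\z)$. The strategy is to produce a finite-dimensional, period-preserving deformation $\{u_\zeta\}_{\zeta\in B}$ of $u$ inside $\CMI^r_*(M,\r^n)$ (so that $\Flux_{u_\zeta}=\Flux_u$ throughout) that is rich enough to move the difference map $\delta_\zeta(x,y):=u_\zeta(x)-u_\zeta(y)$ off $0$ for all $(x,y)$ away from the diagonal $\Delta_M\subset M\times M$.

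The key step is to upgrade the period-dominating spray of Lemma \ref{lem:deformation} to a spray that is simultaneously \emph{difference dominating}. Given a finite set of pairs $\{(x_i,y_i)\}_{i=1}^k \subset (M\times M)\setminus \Delta_M$, I would construct a holomorphic spray of the form \eqref{eq:Phi}--\eqref{eq:Phi2},
$$
\Phi_f(\zeta,x)=\phi^1_{\zeta_1 g_1(x)}\circ\cdots\circ \phi^N_{\zeta_N g_N(x)}(f(x))\in\Agot^*,
$$
by choosing the vector fields $V_j$ tangent to $\Agot$ and the functions $g_j\in\Oscr(M)$ so that the combined map
$$
\zeta\longmapsto\Bigl(P(\Phi_f(\zeta,\cdot)),\; u_\zeta(x_1)-u_\zeta(y_1),\ldots,u_\zeta(x_k)-u_\zeta(y_k)\Bigr)\in(\c^n)^l\times(\r^n)^k
$$
is submersive at $\zeta=0$, where $u_\zeta(x)=u(p_0)+\int_{p_0}^x\Re(\Phi_f(\zeta,\cdot)\theta)$. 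Here nondegeneracy of $f$ (via Lemma \ref{lem:nonflat}) ensures that the tangent spaces $T_{f(x)}\Agot$ span $\c^n$ as $x$ varies, and the freedom to choose $g_j$ to vanish or be large at prescribed points lets one decouple the period contribution from the point-difference contribution (picking cutoff-like $g_j$ supported near $x_i$ and disjoint from the support of other $g_j$ and from the homology cycles). An application of the implicit function theorem on the $P^{-1}(P(f))$-level set then produces a real-analytic ball $B\subset\r^M$ of effective parameters with $\Phi_f(\zeta,\cdot)$ of fixed periods for $\zeta\in B$, along which $\zeta\mapsto(\delta_\zeta(x_i,y_i))_{i=1}^k$ remains submersive at $\zeta=0$, and $\Flux_{u_\zeta}\equiv\Flux_u$ by \eqref{eq:FP}.

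With this in hand, the general position argument proceeds by Sard. Split the compact space $M\times M$ into two pieces: a closed neighborhood $V$ of $\Delta_M$ in which the immersion property of $u$, which is $\Cscr^r$-stable, guarantees that every sufficiently small perturbation $u_\zeta$ is injective on $V$; and the compact complement $K=(M\times M)\setminus\mathring V$, on which we need injectivity. Cover $K$ by finitely many compact pairs $\{(x_i,y_i)\}_{i=1}^k$ in the sense that transversality at these pairs, combined with continuity, suffices for a neighborhood. Apply the construction of the previous paragraph to this finite set, producing $\delta\colon B\times K\to\r^n$ of class at least $\Cscr^1$, submersive along $\{0\}\times K$ in the $B$-direction (after shrinking $K$ into those finitely many neighborhoods and patching). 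Since $\dim_\r K\le 4<n$, the parametric transversality theorem implies that for a dense set of $\zeta\in B$ arbitrarily close to $0$, the map $\delta_\zeta\colon K\to\r^n$ avoids $0$, i.e.\ $u_\zeta$ is injective on $K$. Combined with injectivity on $V$, we obtain the desired conformal minimal embedding $\tilde u=u_\zeta$ with $\Flux_{\tilde u}=\Flux_u$, arbitrarily close to $u$ in $\Cscr^r(M)$.

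The main obstacle, I expect, is the construction of the spray that simultaneously dominates periods and separates points \emph{uniformly} in $(x,y)$ away from the diagonal; the discrete set of pairs $(x_i,y_i)$ must suffice to control all of $K$ by openness of the submersion condition, which requires a careful compactness/covering argument. A secondary subtlety is to ensure, when arranging point separation, that the same $g_j$'s used to kill interference with the period map are compatible with the point-value conditions, which is where Lemma \ref{lem:nonflat} and the freedom to choose $p_j$ and $q_j$ as in the proof of Theorem \ref{th:local}(a) become essential.
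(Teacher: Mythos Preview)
Your proposal is correct and follows essentially the same approach as the paper: reduce to the nondegenerate case via Theorem \ref{th:local}(a), build flux-preserving sprays from flows of vector fields tangent to $\Agot$ with coefficient functions localized on arcs joining pairs $(p,q)$ and away from the homology basis, and then apply Abraham's transversality argument using $\dim_\r(M\times M)=4<n$. The only organizational difference is that the paper isolates the construction for a single pair $(p,q)$ (Lemma \ref{lem:pq}, where the difference-dominating spray is built first and the periods are corrected afterward by a second spray with an $O(\epsilon|\zeta|)$ estimate) and then combines finitely many such sprays via the composition operation $\sharp$, whereas you sketch a single spray handling all pairs and periods at once; both realizations rest on the same localization idea you identify.
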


Since the set of embeddings $M\to\r^n$ is clearly open in the set of immersions of class 
$\Cscr^r(M)$ for any $r\ge 1$ and $\CMI^r(M,\r^n)$ is a closed subset of the Banach 
space $\Cscr^r(M,\r^n)$ (hence a Baire space), we immediately get

\begin{corollary}
Let $M$ be a compact bordered Riemann surface. For every pair of integers
$n\ge 5$ and $r\ge 1$  the set of conformal minimal embeddings $M\hookrightarrow \r^n$ of class $\Cscr^r(M)$ is
residual (of the second category) in the Baire space $\CMI^r(M,\r^n)$.
\end{corollary}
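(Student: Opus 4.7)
The plan is to combine the structure results of Theorem~\ref{th:local} with a parametric transversality argument (of Sard--Smale type) in order to eliminate self-intersections, exploiting that for $n\ge 5$ we have $\dim_\r(M\times M) = 4 < n$. First I would reduce to the nondegenerate case: by Theorem~\ref{th:local}(a) we may approximate $u$ in the $\Cscr^r(M)$ topology by some $u\in\CMI^r_*(M,\r^n)$ with the same flux, and by part (c) we may further assume that $f = 2\di u/\theta \colon \wt M\to\Agot^*$ is holomorphic and nondegenerate on an open neighborhood $\wt M$ of $M$ in an ambient open Riemann surface.

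Next, for a fixed pair of distinct points $(x_0,y_0)\in M\times M$ I would enhance the period-dominating spray of Lemma~\ref{lem:deformation} so that it \emph{also} dominates the difference evaluation at $(x_0,y_0)$. Concretely, I build a spray of the form~\eqref{eq:Phi2},
\[
\Phi_f(\zeta,x) = \phi^1_{\zeta_1 g_1(x)}\circ\cdots\circ\phi^N_{\zeta_N g_N(x)}(f(x)) \in \Agot^*,
\]
in which a first block of parameters gives period domination at $\zeta=0$ as in Lemma~\ref{lem:deformation}, and a second block uses holomorphic vector fields $V_j$ tangent to $\Agot$ together with functions $g_j\in\Oscr(\wt M)$ chosen so that (i) $\int_{\gamma_k} g_j\theta = 0$ for every loop $\gamma_k$ of the homology basis (so the new parameters do not spoil period domination), while (ii) along a chosen arc from $x_0$ to $y_0$ the integrated infinitesimal variations $V_j(f(\cdot))\,g_j\theta$ span $\c^n$. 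The nondegeneracy of $f$, together with Lemma~\ref{lem:nonflat} and the infinite-dimensionality of $\Oscr(\wt M)$, is precisely what makes such a choice possible. By the implicit function theorem applied to the real part of the period map (as in the proof of Theorem~\ref{th:local}(b)), the flux-preserving slice $S = \{\zeta : \Re P(\Phi_f(\zeta,\cdot)) = \Re P(f)\}$ is a real-analytic Banach submanifold near $0$, and the integrated family $u_\zeta(x) = u(p) + \int_p^x \Re(\Phi_f(\zeta,\cdot)\theta)$ consists of conformal minimal immersions with $\Flux_{u_\zeta} = \Flux_u$, for which the difference map $S\ni\zeta \mapsto u_\zeta(x_0) - u_\zeta(y_0) \in \r^n$ is a submersion at $\zeta=0$.

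The transversality step then goes as follows. On a small product neighborhood $W\subset (M\times M)\setminus\mathrm{diag}$ of $(x_0,y_0)$ the map $\delta(\zeta,x,y) = u_\zeta(x) - u_\zeta(y)$ is a submersion at $(0,x_0,y_0)$, so $\delta^{-1}(0)\cap (S\times W)$ is a real-analytic subvariety whose projection to $S$ has fibers of expected dimension $4-n\le -1$; by Sard's theorem, a residual set of $\zeta\in S$ arbitrarily close to $0$ gives a $u_\zeta$ that is injective on $W$. I would then cover the compact set $(M\times M)\setminus V$ by finitely many such product neighborhoods $W_1,\ldots,W_K$, where $V$ is a sufficiently small neighborhood of the diagonal on which the immersion property of $u$ already implies local injectivity (an open $\Cscr^r$ condition). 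A finite induction — at step $i$ perturb inside the open subset of $\CMI^r(M,\r^n)$ on which injectivity on $W_1\cup\cdots\cup W_{i-1}$ already holds, with the perturbation small enough in $\Cscr^r(M)$ to preserve that injectivity and the local injectivity near the diagonal — yields the desired embedding $\tilde u$ with $\Flux_{\tilde u} = \Flux_u$.

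The main obstacle, as in the directed-immersion setting of~\cite{AF2}, is the simultaneous construction of period domination and difference domination in a single spray: producing the functions $g_j\in\Oscr(\wt M)$ with prescribed vanishing of the periods $\int_{\gamma_k} g_j\theta$ while keeping the integrals along the arc from $x_0$ to $y_0$ in general position relative to the directions $V_j(f(\cdot))\in T_{f(\cdot)}\Agot$. This is essentially a Cousin-I / Mittag-Leffler problem on $\wt M$ and is where the nondegeneracy reduction of the first paragraph becomes indispensable, since by Lemma~\ref{lem:nonflat} the tangent spaces $T_{f(x)}\Agot$ collectively span $\c^n$. The dimension hypothesis $n\ge 5$ is used precisely at the Sard step: when $n=4$, self-intersections are of the expected codimension $n = \dim_\r(M\times M)$ and hence stable under perturbation, which is why the theorem fails in that range.
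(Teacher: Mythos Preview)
Your proposal essentially reproves Theorem~\ref{th:desingBRS}, whereas the paper derives the corollary from that theorem in one line: embeddings form an open subset of $\CMI^r(M,\r^n)$, they are dense by Theorem~\ref{th:desingBRS}, and $\CMI^r(M,\r^n)$ is a closed subset of the Banach space $\Cscr^r(M,\r^n)$, hence a Baire space; an open dense subset is residual. You never actually state this final step, so as a proof of the corollary your write-up is incomplete even if the underlying density argument succeeds.

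As an argument for the underlying density statement, your strategy is close to the paper's but differs in two technical respects. First, the paper (Lemma~\ref{lem:pq}) does \emph{not} seek holomorphic $g_j$ with exactly vanishing periods as in your condition~(i); instead it Mergelyan-approximates smooth bump functions $h_i$ supported on the arc $\Lambda$ from $p$ to $q$ and vanishing identically on the homology loops $C$, obtaining holomorphic $g_i$ whose periods over the $C_j$ are only $O(\epsilon)$, and then \emph{composes} with a separate period-dominating spray $\Phi$ via the implicit function theorem; the estimate $|\rho(\zeta)|\le \eta_1\epsilon|\zeta|$ shows the period correction does not destroy the spanning property of the $\Lambda$-integrals. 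Second, rather than your finite induction over neighborhoods $W_1,\ldots,W_K$, the paper composes the local sprays $H^{(p_i,q_i)}$ into a single spray via the $\sharp$ operation and applies Abraham's transversality argument once on all of $(M\times M)\setminus U$.

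Both routes are viable, but the paper's avoids a slip in your construction: your condition~(i) as written, $\int_{\gamma_k} g_j\theta=0$, is not the relevant one. The infinitesimal contribution of the parameter $\zeta_j$ to the period over $\gamma_k$ is $\int_{\gamma_k} g_j\, V_j(f)\,\theta$, so tangency of the second-block directions to your slice $S$ requires the vanishing of \emph{that} integral. Once corrected, simultaneously achieving (i) and (ii) is a genuine linear-algebra claim (surjectivity of one finite-rank functional on the kernel of another) that still needs an argument; the paper's approximate-then-correct scheme sidesteps this entirely.
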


\begin{proof}[Proof of Theorem \ref{th:desingBRS}]
In view of Theorem \ref{th:local} (parts (a) and (c)), we may assume that $M$ is a smoothly 
bounded domain in an open Riemann surface $R$ and $u$ is a nondegenerate 
(see Definition \ref{def:nondegenerate}) conformal minimal immersion in
an open neighborhood of $M$ in $R$. 

We associate to $u$ the {\em difference map} $\delta u\colon M\times M\to \r^n$ defined by
\[
	\delta u(x,y)=u(y)-u(x), \qquad x,y\in M.
\]
Clearly, $u$ is injective if and only if $(\delta u)^{-1}(0)= D_M:=\{(x,x): x\in M\}$. 
Since $u$ is an immersion, it is locally injective, and hence there is an open neighborhood 
$U\subset M\times M$ of the diagonal  $D_M$ such that $\delta u$ does not assume the value 
$0\in \r^n$ on $\overline U\setminus D_M$. To prove the theorem, it suffices to find arbitrarily close 
to $u$ another conformal minimal immersion $\tilde u \colon M\to\r^n$ whose difference map 
$\delta \tilde u$  is transverse to the origin $0\in \r^n$ on $M\times M\setminus U$.
Since $\dim_\r M\times M=4<n$, this will imply that $\delta\tilde u $ does not assume the 
value zero on $M\times M\setminus U$, so $\tilde u (x)\ne \tilde u (y)$ if $(x,y)\in M\times M\setminus U$. 
If $(x,y)\in U \setminus D_M$ then $\tilde u (x)\ne \tilde u (y)$ provided that 
$\tilde u$ is close enough to $u$, so $\tilde u $ is an embedding. 

To find such $\tilde u$, we shall  construct a neighborhood $\Omega \subset \r^N$ 
of the origin in a Euclidean space and a real analytic map $H\colon \Omega \times M \to \r^n$ 
satisfying the following properties:
\begin{itemize}
\item[\rm (a)] $H(0,\cdotp)=u$,
\item[\rm (b)] $H(\xi,\cdotp)\colon M\to \r^n$ is a conformal minimal immersion of class
$\Cscr^r(M)$ for every $\xi \in \Omega$, and 
\item[\rm (c)]  the difference map $\delta H\colon \Omega \times M\times M \to \r^n$, defined by 
\begin{equation}
\label{eq:difference}
	\delta H(\xi,x,y) = H(\xi,y)-H(\xi,x), \qquad \xi\in \Omega, \ \ x,y\in M, 
\end{equation}
is a submersive family  on $ M\times M\setminus U$, in the sense that the partial differential 
\begin{equation} \label{eq:pd}
	d_\xi|_{\xi=0} \, \delta H(\xi,x,y) \colon \r^N \to \r^n
\end{equation}
is surjective for every $(x,y)\in M\times M\setminus U$. 
\end{itemize}

Assume for a moment that such $H$ exists.
By compactness of $M\times M \setminus U$, the partial differential $d_\xi (\delta H)$ 
(\ref{eq:pd}) is surjective for all $\xi$ in a neighborhood 
$\Omega'\subset \Omega$ of the origin in $\r^N$. 
Hence the map $\delta H \colon M\times M\setminus U\to\r^n$ 
is transverse to any submanifold of $\r^n$, in particular, to the origin $0\in \r^n$. 
The standard transversality argument due to Abraham \cite{Abraham}
(a reduction to Sard's theorem; see also \cite[Section  7.8]{F2011}) 
shows that for a generic choice of $\xi\in\Omega'$,  the difference map 
$\delta H(\xi,\cdotp,\cdotp)$ is transverse to $0\in\r^n$ on $M\times M\setminus U$,
and hence it omits the value $0$ by dimension reasons. 
By choosing $\xi$ sufficiently close to $0\in\r^N$ we thus obtain a 
conformal minimal embedding $\tilde u=H(\xi,\cdotp)\colon M \to \r^n$ close to $u$,
thereby proving the theorem.  

We construct a spray $H$ of conformal minimal immersions, satisfying properties (a)--(c)
above, by  suitably modifying the proof of the corresponding result for directed holomorphic immersions
in \cite[Theorem 2.4]{AF2}.

Fix a nowhere vanishing holomorphic $1$-form $\theta$ on $M$ and write 
$2\di u=f\theta$, where $f\colon M\to \Agot^*$ is a nondegenerate 
holomorphic map (see Definition \ref{def:nondegenerate}). 
The main step in the construction of the spray $H$ is furnished by the following lemma.

%
%
%   DEFORMATION LEMMA FOR A PAIR OF POINTS
%
%
\begin{lemma}
\label{lem:pq}
{\rm (Assumptions as above.)} 
For every $(p,q)\in M\times M\setminus D_M$ there exists a spray 
$H=H^{(p,q)}(\xi,\cdotp) \colon M\to \r^n$ of conformal minimal immersions of class $\Cscr^r(M)$, 
depending analytically on the parameter $\xi$ in a neighborhood of the origin in $\r^n$, 
satisfying properties (a) and (b) above, but with (c) replaced by the following property:
\begin{itemize}
\item[\rm (c')]   the partial differential $d_\xi|_{\xi=0} \, \delta H(\xi,p,q) \colon \r^n \to \r^n$ 
is an isomorphism. 
\end{itemize}
\end{lemma}

\begin{proof}
Let $\Lambda \subset M$ be a smooth embedded arc connecting $p$ to $q$. Pick a point $p_0\in M\setminus\Lambda$ 
and closed loops $C_1,\ldots, C_l \subset M\setminus\Lambda$ based at $p_0$ and forming a Runge basis of $H_1(M;\z)$. 
Set $C=\bigcup_{j=1}^l C_j$. Let $\gamma_j\colon [0,1]\to C_j$ ($j=1,\ldots,l$) and $\lambda\colon [0,1]\to \Lambda$ be 
smooth parametrizations of the respective curves.  

Since $u$ is nonflat, Lemma \ref{lem:nonflat} and the Cartan extension theorem
furnish holomorphic vector fields $V_1,\ldots,V_n$ on $\c^n$, tangential to $\Agot$, 
and points $x_1,\ldots, x_n\in \Lambda\setminus \{p,q\}$ such that, 
setting $z_i=f(x_i)\in \Agot^*$, the vectors $V_i(z_i)$ for $i=1,\ldots,n$ span $\c^n$. 
Let $t_i\in (0,1)$ be such that $\lambda(t_i)=x_i$. Let $\phi^i_t$ denote the flow of $V_i$. 
Choose smooth functions $h_i\colon C\cup \Lambda \to \r_+$ $(i=1,\ldots,n)$ that vanish at the endpoints 
$p,q$ of $\Lambda$ and on the curves $C$; their values on $\Lambda$ will be chosen later. 
Let $\zeta=(\zeta_1,\ldots,\zeta_n) \in \c^n$. Consider the map 
\[
	\psi_f(\zeta,x)=\phi^1_{\zeta_{1}h_{1}(x)} \circ \cdots  \circ \phi^n_{\zeta_{n}h_{n}(x)} (f(x)) 
	\in \Agot^*,\quad x\in C\cup\Lambda.
\] 
Clearly it is holomorphic in the variable $\zeta\in\c^n$ near the origin,
$\psi_f(0,\cdotp)=f$, and $\psi_f(\zeta,x)=f(x)$ if $x\in C\cup \{p,q\}$ (since $h_i=0$ on $C\cup \{p,q\}$). We have that
\[
	\frac{\di \psi_f(\zeta,x)}{\di \zeta_{i}}\bigg|_{\zeta=0} = h_{i}(x)\, V_i(f(x)), \quad x\in C\cup \Lambda, \quad i=1,\ldots,n.
\]
By choosing the function $h_i$ to have support concentrated near the point 
$x_i=\lambda(t_i) \in \Lambda$, we can arrange that for every $i=1,\ldots,n$ we have that
\[
	\int_0^1 h_{i}(\lambda(t))\, V_i(f(\lambda(t))) \, \theta(\lambda(t),\dot\lambda(t)) \,dt
	\approx V_i(z_i) \, \theta(\lambda(t_i),\dot{\lambda}(t_i)) \in \c^n.
\]
Assuming as we may that the above approximations are close enough, the vectors on the left hand side 
of the above display form a basis of $\c^n$. 

Fix a number $\epsilon>0$; its precise value will be chosen later. 
We apply Mergelyan's theorem to find holomorphic functions $g_i\in \Oscr(M)$ such that 
\[
	\sup_{C\cup\Lambda} |g_i-h_i| <\epsilon \quad \text{for}\ \ i=1,\ldots,n. 
\]
Consider the holomorphic maps
\begin{eqnarray}
\label{eq:Psi2}
	\Psi(\zeta,x,z) &=&
	\phi^1_{\zeta_{1}g_{1}(x)} \circ \cdots  \circ \phi^n_{\zeta_{n}g_{n}(x)} (z) \in \Agot,  \cr
	\Psi_f(\zeta,x) &=& \Psi(\zeta,x,f(x)) \in \Agot,
\end{eqnarray}
where $x\in M$, $z\in \Agot$, and $\zeta$ is near the origin in $\c^n$. Note that $\Psi_f(0,\cdotp)=f$.
If the approximations of $h_i$ by $g_i$ are close enough, then the vectors
\begin{equation}\label{eq:derivatives}
	\frac{\di}{\di \zeta_{i}}\bigg|_{\zeta=0} \int_0^1 \Psi_f(\zeta,\lambda(t)) 
	\,\theta(\lambda(t),\dot\lambda(t)) \,dt   
	= \int_0^1 g_{i}(\lambda(t))\, V_i(f(\lambda(t))) \, \theta(\lambda(t),\dot\lambda(t)) \,dt 
\end{equation}
in $\c^n$ are so close to the corresponding vectors $V_i(z_i) \, \theta(\lambda(t_i),\dot{\lambda}(t_i))$ 
$(i=1,\ldots,n)$ that they are $\c$-linearly independent. 

The $\c^n$-valued $1$-form $\Psi_f(\zeta,\cdotp) \,\theta$ 
need not have exact real part, so it may not correspond to the differential of a conformal
minimal immersion. We shall now correct this. 

From the Taylor expansion of the flow of a vector field we see that
\[
	\Psi_f(\zeta,x)=f(x)+\sum_{i=1}^n \zeta_i g_i(x) V_i(f(x)) + o(|\zeta|).
\]
Since $|g_i|<\epsilon$ on $C$, the periods over the loops $C_j$ can be estimated by 
\begin{equation}
\label{eq:estimate-periods}
	\left| \int_{C_j} \big(\Psi_f(\zeta,\cdotp)-f\big)\, \theta \right| \le \eta_0\epsilon |\zeta|
\end{equation}
for some constant $\eta_0>0$ and for all sufficiently small $|\zeta|$. 

Lemma \ref{lem:deformation} gives holomorphic maps $\Phi(\wt \zeta,x,z)$ and $\Phi_f(\wt\zeta,x)=\Phi(\wt\zeta,x,f(x))$ 
(see \eqref{eq:Phi} and \eqref{eq:Phi2}), 
with the parameter $\wt \zeta$ near $0\in\c^{\wt N}$ for some $\wt N\in\n$ and $x\in M$, 
such that $\Phi(0,x,z)=z$ and the differential of the associated period map $\wt \zeta \mapsto P(\Phi_f(\wt\zeta,\cdotp)) \in \c^{ln}$ 
(see (\ref{eq:P})) at the point $\wt\zeta=0$ has maximal rank equal to $ln$. The same is true if the map 
$f\in \Ascr(M,\Agot^*)$ varies locally near the given initial map. 
In particular, we can replace $f$ by the spray $\Psi_f(\zeta,\cdotp)$ and consider the composed map 
\[
	\c^{\wt N}\times \c^n\times M  \ni (\wt \zeta,\zeta,x) \longmapsto \Phi(\wt\zeta,x,\Psi_f(\zeta,x)) 
	\in \Agot^*
\]
which is defined and holomorphic for $(\wt \zeta,\zeta)$ near the origin in $\c^{\wt N}\times \c^n$ and for $x\in M$. 
The implicit function theorem furnishes a $\c^{\wt N}$-valued holomorphic map 
$\wt \zeta=\rho(\zeta)$ near $\zeta=0\in\c^n$, with $\rho(0)=0\in\c^{\wt N}$, 
such that the $\c^n$-valued holomorphic 1-form on $M$ given by
\[
	\Theta_f(\zeta,x,v)= \Phi(\rho(\zeta),x,\Psi_f(\zeta,x))\, \theta(x,v), \quad x\in M,\ v\in T_x M 
\]
satisfies the conditions
\[
	\int_{C_j} \Theta_f(\zeta,\cdotp,\cdotp) = \int_{C_j} f\theta,\qquad j=1,\ldots, l
\]
for every $\zeta\in\c^n$ near the origin.   In particular, the real parts of these periods vanish. 
(The map $\rho=(\rho_1,\ldots,\rho_n)$ also depends on $f$, but we suppressed 
this dependence in our notation.) It follows that the integral
\begin{equation}
\label{eq:H}
	H_u(\zeta,x) =  u(p_0)+ \int_{p_0}^x \Re(\Theta_f(\zeta,\cdotp,\cdotp))  =
	u(p_0) + \int_0^1 \Re(\Theta_f(\zeta,\gamma(t),\dot\gamma(t)))\, dt
\end{equation}
is independent of the choice of the path from the initial point $p_0\in M$ to 
the variable point $x\in M$. Clearly $H_u$ is analytic, $H_u(0,\cdotp)=u$, 
$H_u(\zeta,\cdotp) \colon M\to\r^n$ is a conformal minimal immersion for every 
$\zeta\in\c^n$ sufficiently close to $0$, and the flux homomorphism of $H_u(\zeta,\cdotp) $
equals that of $u$ for every fixed $\zeta$. 
Furthermore, in view of  (\ref{eq:estimate-periods}) we have the estimate
\begin{equation}
\label{eq:est-rho}
	|\rho(\zeta)| \le \eta_1\epsilon |\zeta|
\end{equation}
for some constant $\eta_1>0$ independent of $\epsilon$ and $\zeta$. 

The map $\Phi(\wt \zeta,x,z)$, furnished by Lemma \ref{lem:deformation},  is obtained by composing 
flows of certain holomorphic vector fields $W_j$ on $\Agot$ for the complex times 
$\wt \zeta_j \wt g_j(x)$, where $\wt g_j\in\Oscr(M)$ and $\wt \zeta_j \in\c$. (See (\ref{eq:Phi2}).)
The Taylor expansion of the flow, together with the estimate (\ref{eq:est-rho}), gives
\[	\left| \Phi(\rho(\zeta),x,\Psi_f(\zeta,x)) - \Psi_f(\zeta,x) \right| 
	=  \left| \sum \rho_j(\zeta) \wt g_j(x)   W_j(\Psi_f(\zeta,x)) + o(|\zeta|) \right|
	\le \eta_2\epsilon |\zeta| 
\]      
for some constant $\eta_2>0$ and for all $x\in M$ and all $\zeta$ near the origin in $\c^n$. 
Applying this estimate on the curve $\Lambda$ with the endpoints $p$ and $q$ we get that
\[
	\left| \int_0^1 \Theta_f(\zeta,\lambda(t),\dot\lambda(t))\, dt  - 
	\int_0^1 \Psi_f(\zeta,\lambda(t)) \,\theta(\lambda(t),\dot\lambda(t)) \,dt \right| 
	\le \eta_3 \epsilon |\zeta|
\]
for some constant $\eta_3>0$  independent of $\epsilon$ and $\zeta$.  
If $\epsilon>0$ is chosen small enough, then it follows that the derivatives
\[
	\frac{\di}{\di \zeta_{i}}\bigg|_{\zeta=0} \int_0^1 \Theta_f(\zeta,\lambda(t),\dot\lambda(t)) \, dt \in\c^n,
	\qquad i=1,\ldots,n,   
\]
are so close to the respective vectors in (\ref{eq:derivatives}) 
that they are $\c$-linearly independent.  This means that the holomorphic map 
$\zeta\mapsto \int_0^1 \Theta_f(\zeta,\lambda(t),\dot\lambda(t))\, dt \in \c^n$
is locally biholomorphic near $\zeta=0$. By (\ref{eq:H}), its real part equals
\[
	\int_0^1 \Re(\Theta_f(\zeta,\lambda(t),\dot\lambda(t))) \, dt
	= H_u(\zeta,q)-H_u(\zeta,p)=\delta H_u(\zeta,p,q).
\]
After a suitable $\c$-linear change of coordinates $\zeta=\xi+\imath\eta$ on $\c^n$
it follows that the partial differential 
$\frac{\di}{\di\xi}\big|_{\xi=0} \delta H_u(\xi,p,q) \colon \r^n\to\r^n$
is an isomorphism. The spray 
\begin{equation}\label{eq:Hpq}
H^{(p,q)}(\xi,\cdotp):=H_u(\xi,\cdotp)
\end{equation}
satisfies the conclusion of Lemma \ref{lem:pq}.
\end{proof}

The spray $H_u$ \eqref{eq:Hpq}, furnished by Lemma \ref{lem:pq}, depends
real analytically on $u\in \CMI_*^r(M,\r^n)$ in a neighborhood of a given
nondegenerate conformal minimal immersion $u_0\in \CMI_*^r(M,\r^n)$. 
In particular, if $u(\eta,\cdotp)\colon M\to \r^n$ is a family of conformal minimal immersions 
depending analytically on a parameter $\eta$, then $H_{u(\eta,\cdotp)}(\xi,\cdotp)$ 
depends analytically on $(\xi,\eta)$. This allows us to compose any finite number of such 
sprays just as was done in \cite{AF2}. We recall this operation for two sprays. Suppose that 
$H=H_u(\xi,\cdotp)$ and $G=G_u(\eta,\cdotp)$ are  sprays with 
$H_u(0,\cdotp)=G_u(0,\cdotp)=u$. The composed spray is defined by 
\[
	(H \sharp G)_u(\xi,\eta,x)=G_{H_u(\xi,\cdotp)} (\eta,x),\quad x\in M.
\]
Clearly we have $(H\sharp G)_u(0,\eta,\cdotp)=G_u(\eta,\cdotp)$ and
$(H\sharp G)_u(\xi,0,\cdotp)	=H_u(\xi,\cdotp)$. 
The operation $\sharp$ extends by induction to finitely many factors and is associative. 
(This is similar to the composition of sprays introduced by Gromov \cite{Gromov1989}; 
see also \cite[p.\ 246]{F2011}.)

Pick an  open neighborhood $U\subset M\times M$ of the diagonal $D_M$ 
such that $\overline U\cap (\delta u)^{-1}(0)=D_M$. 
Lemma \ref{lem:pq}  furnishes a finite open covering $\Ucal=\{U_i\}_{i=1}^m$ 
of the compact set $M \times M\setminus U$ and sprays
of conformal minimal immersions $H^i=H^i(\xi^i,\cdotp)\colon M\to \r^n$, 
with  $H^i(0,\cdotp)=u$, where $\xi^i=(\xi^i_1,\ldots,\xi^i_{k_i}) \in \Omega_i\subset \r^{k_i}$, 
such that the difference map $\delta H^i(\xi^i,p,q)$ is submersive at $\xi^i=0$ 
for all $(p,q)\in U_i$. By taking $\xi=(\xi^1,\ldots,\xi^m)\in \r^N$, 
with $N=\sum_{i=1}^m k_i$, and setting
\[
	H(\xi,x) = (H^1 \sharp H^2 \sharp \cdots \sharp H^m)(\xi^1,\ldots,\xi^m,x)
\]
we obtain a spray satisfying properties (a) and (b) whose 
difference map $\delta H$ is submersive on $M\times M\setminus U$ for all $\xi\in \r^N$ 
sufficiently near the origin.  As explained earlier, a generic member $H(\xi,\cdotp)$ 
of this spray (for $\xi$ sufficiently close to $0\in\r^N$) is a conformal minimal embedding 
$M\hra\r^n$. 
\end{proof}

%%%%%%%%%%
%%%%%%%%%%
%%%%%%%%%%
%%%%%%%%%%   
%%%%%%%%%%
%%%%%%%%%%

%%%%%%%%%%%%%%%%%%%%%%%%%%%%%%%%%%%%%%%%%%%%%%%%%%%%%%
%															              							     %
%   MERGELYAN'S THEOREM FOR CMIS			                 	                                                                                 %
%																                                                                %
%%%%%%%%%%%%%%%%%%%%%%%%%%%%%%%%%%%%%%%%%%%%%%%%%%%%%%
%                       

\section{ Mergelyan's theorem for conformal minimal immersions to $\r^n$}
\label{sec:Mergelyan}

In this section, we prove a Mergelyan type approximation theorem for conformal minimal immersions of open 
Riemann surfaces into $\r^n$ for any $n\ge 3$; see Theorem \ref{th:Mergelyan}. The special case $n=3$ has been
already proved by  Alarc\'on and L\'opez \cite[Theorem 4.9]{AL1} by using the {\em L\'opez-Ros transformation} 
for conformal minimal immersions $M\to\r^3$ (see \cite{LR}), a tool that is not available for $n\geq 4$. 
Here we use the more general approach which has been developed in \cite{AF2} 
for approximating holomorphic null curves and more general directed holomorphic immersions 
of open Riemann surfaces to $\c^n$.

We begin by introducing a suitable type of sets  for the Mergelyan approximation. 
The same type of sets have been used in \cite{AF2} (see Definition 7.1 there) and
in several other papers.

\begin{definition}
\label{def:admissible}
A compact subset $S$ of an open Riemann surface $M$ is said to be {\em admissible} if $S=K\cup \Gamma$,
where $K=\bigcup \overline D_j$ is a union of finitely many pairwise disjoint, compact, smoothly bounded domains 
$\overline D_j$ in $M$ and $\Gamma=\bigcup \Gamma_i$ is a union of finitely many pairwise disjoint smooth arcs or closed 
curves that intersect $K$ only in their endpoints (or not at all), and such that their intersections with the boundary $bK$ are transverse. 
\end{definition}

Note that an admissible set $S\subset M$ is {\em Runge} in $M$ (i.e.,\ $\Oscr(M)$-convex)
if and only if the inclusion map $S\hra M$ induces an injective homomorphism 
$H_1(S;\z)\hra H_1(M;\z)$ of the first homology groups. If this holds, then we have the 
Mergelyan approximation theorem: Every continuous function $f\colon S=K\cup\Gamma\to \c$ 
that is holomorphic in the interior $\mathring K$ of the compact set $K$ can be approximated, 
uniformly on $S$, by functions holomorphic on $M$. 
More generally, if $f$ is of class ${\mathscr C}^r$ on $S$ for some
$r\ge 0$, then the approximation can be made in the ${\mathscr C}^r(S)$ topology. 

Recall that $\Agot$ denotes the null quadric (\ref{eq:Agot}) and $\Agot^*=\Agot\setminus\{0\}$.

Given an admissible set $S=K\cup \Gamma\subset M$, we denote by $\Ogot(S,\Agot^*)$ the set of all 
smooth maps $S\to\Agot^*$  which are holomorphic on an unspecified open neighborhood of $K$ 
(depending on the map). In accordance with Definition \ref{def:nondegenerate},
we say that a map $f\in\Ogot(S,\Agot^*)$ is {\em nonflat} if it maps no component of $K$ and no component of $\Gamma$ to a ray in $\Agot^*$. Likewise, we say that $f\in\Ogot(S,\Agot^*)$ is {\em nondegenerate} if it maps no component of $K$ and no component of $\Gamma$ to a complex hyperplane of $\c^n$. We denote by $\Ogot_*(S,\Agot^*)$ the subset of $\Ogot(S,\Agot^*)$ consisting of all nondegenerate maps.

Fix a nowhere vanishing holomorphic 1-form $\theta$ on $M$. 
The following notion of a generalized conformal minimal immersion  emulates the spirit of the concept of 
{\em marked immersion} \cite{AL1}. (The same notion has been used in \cite[Definition 6.2]{AF4}.)

%
%
%  DEFINITION: GENERALIZED CMI'S
%
%
\begin{definition}\label{def:generalized}
Let $M$ be an open Riemann surface and let $S=K\cup \Gamma \subset M$ be an admissible subset 
(see Definition \ref{def:admissible}). A {\em generalized conformal minimal immersion} on $S$ 
is a pair $(u,f\theta)$, where $f\in \Ogot(S,\Agot^*)$
and $u\colon S\to\r^n$ is a smooth map which is a conformal minimal immersion on an open 
neighborhood of $K$, such that the following properties hold:
\begin{itemize}
\item $f\theta = 2\partial u$ on an open neighborhood of $K$ in $M$;

\vspace{1mm}

\item for any smooth path $\alpha$  in $M$ parametrizing a connected component of $\Gamma$ we have
$\Re(\alpha^*(f\theta)) = \alpha^*(du)= d(u\circ\alpha)$.
\end{itemize}
A generalized conformal minimal immersion $(u,f\theta)$ is said to be {\em nonflat} 
(resp.\ {\em nondegenerate}) if the map $f\in\Ogot(S,\Agot^*)$ is nonflat (resp.\ nondegenerate)
on every connected component of $K$ and on every connected component
of $\Gamma$ (see Definition \ref{def:nondegenerate}). 
\end{definition}

Property (b) shows that a generalized conformal minimal immersion 
on a curve $C\subset M$ is nothing else than a $1$-jet of a conformal immersion along $C$. 

We denote by $\GCMI(S,\r^n)$ the set of all generalized conformal minimal immersions $S\to\r^n$ and by 
$\GCMI_*(S,\r^n)\subset \GCMI(S,\r^n)$ the subset consisting of all nondegenerate ones.
We say that $(u,f\theta)\in\GCMI(S)$ {\em can be approximated in the $\Cscr^1(S)$ topology} by conformal minimal immersions 
in $\CMI(M)$ if there exists a sequence $v_i\in \CMI(M)$ $(i\in\n)$ such that $v_i|_S$ converges to $u|_S$ 
in the $\Cscr^0(S)$ topology and $2\partial v_i|_S$ converges to $f\theta|_S$ in the $\Cscr^0(S)$ topology. 

%
%
%
%   MERGELYAN'S THEOREM FOR CMI'S
%
%
\begin{theorem}[Mergelyan's theorem for conformal minimal immersions]
\label{th:Mergelyan}
Assume that $M$ is an open Riemann surface and that $S=K\cup \Gamma$ is a compact Runge admissible set in $M$. 
Then, every generalized conformal minimal immersion $(u,f\theta)\in\GCMI(S, \r^n)$  for $n\ge 3$ can be approximated in 
the $\Cscr^1(S)$ topology by nondegenerate conformal minimal immersions $\tilde u\in\CMI_*(M,\r^n)$. 

Furthermore, given a group homomorphism $\pgot \colon H_1(M;\z)\to\r^n$ satisfying $\pgot(C)= \Flux_{u} (C)$ for 
every closed curve $C\subset S$, we can choose $\tilde u$ as above such that $\Flux_{\tilde u}=\pgot$.
\end{theorem}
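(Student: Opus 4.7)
The plan is to first upgrade the generalized conformal minimal immersion $(u,f\theta)$ on the admissible set $S=K\cup\Gamma$ to a genuine nondegenerate conformal minimal immersion on a slightly thickened compact bordered Runge neighborhood $M_0\subset M$ of $S$, and then propagate outward along an exhaustion of $M$ by compact bordered Runge domains $M_0\Subset M_1\Subset M_2\Subset\cdots$ with $M=\bigcup_j M_j$. At each stage I produce a nondegenerate $u_j\in\CMI_*(M_j,\r^n)$ with $\Flux_{u_j}=\pgot|_{H_1(M_j;\z)}$, sufficiently close to $u_{j-1}$ in the $\Cscr^1(M_{j-1})$ topology, and take the limit $\tilde u=\lim_j u_j$.

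For the initial reduction I would choose $M_0$ to be a smoothly bounded compact neighborhood of $S$ in $M$ onto which $S$ is a deformation retract; then $M_0$ is Runge and $H_1(S;\z)\to H_1(M_0;\z)$ is an isomorphism. A small perturbation (modeled on the argument of Theorem \ref{th:local}(a)) makes $(u,f\theta)$ nondegenerate in the sense of Definition \ref{def:generalized}. Since $\Agot^*$ is an Oka manifold (Remark \ref{rem:Oka}), the Mergelyan theorem for admissible sets with values in $\Agot^*$ lets me approximate $f$ by a nondegenerate holomorphic map $\tilde f\colon M_0\to\Agot^*$ in the $\Cscr^0(S)$ topology. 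To recover the correct periods I embed $\tilde f$ into a period dominating spray $\Phi_{\tilde f}(\zeta,\cdot)$ supplied by Lemma \ref{lem:deformation} and use the implicit function theorem to find $\zeta_0$ close to $0$ so that $\int_{\gamma_j}\Phi_{\tilde f}(\zeta_0,\cdot)\theta=\imath\,\pgot(\gamma_j)$ on a basis $\{\gamma_j\}$ of $H_1(M_0;\z)$ extending the loops of $S$. Integration yields $u_0\in\CMI_*(M_0,\r^n)$ which is $\Cscr^1$-close to $u$ on $S$ and has flux $\pgot|_{H_1(M_0;\z)}$.

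The inductive step from $M_j$ to $M_{j+1}$ is organized by a Morse exhaustion. In the \emph{noncritical case} ($M_{j+1}$ deformation retracts onto $M_j$, so $H_1(M_j;\z)\cong H_1(M_{j+1};\z)$), I write $2\di u_j=f_j\theta$ with $f_j\in\Oscr(M_j,\Agot^*)$ nondegenerate, embed $f_j$ into a period dominating spray, approximate the full spray uniformly on $M_j$ by a spray defined on $M_{j+1}$ using the Oka property of $\Agot^*$, and adjust the parameter via the implicit function theorem so that the new map has the same purely imaginary periods as $f_j$. The resulting holomorphic map integrates to $u_{j+1}\in\CMI_*(M_{j+1},\r^n)$ close to $u_j$ on $M_j$.

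The main obstacle will be the \emph{critical case}, where $M_{j+1}$ is obtained from $M_j$ by attaching a smooth embedded arc $C_j\subset M\setminus\mathring M_j$ with endpoints $p,q\in bM_j$, introducing a new homology generator $[C_j\cup\alpha]$ for a path $\alpha\subset M_j$ from $q$ to $p$. To reduce this to the noncritical setting, I would first extend $(u_j,f_j\theta)$ across $C_j$ to a nondegenerate generalized conformal minimal immersion on the admissible set $M_j\cup C_j$: the extension of $f_j$ along $C_j$ must be a smooth $\Agot^*$-valued curve whose integral $\int_{C_j}f_j\theta\in\c^n$ has prescribed real part $u_j(q)-u_j(p)$ (so that $u_j$ extends smoothly across $C_j$ with $du_j=\Re(f_j\theta)$) and prescribed imaginary part $\pgot([C_j\cup\alpha])-\int_\alpha\Im(f_j\theta)$ (so that the flux on the new loop is correct). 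Such a curve exists because the tangent spaces to $\Agot$ span $\c^n$ at each nonzero point (Lemma \ref{lem:nonflat}), and any prescribed $\c^n$-valued integral can be realized by localized perturbations of $f_j$ along interior subarcs of $C_j$ using the tangential holomorphic vector fields employed in the proof of Lemma \ref{lem:pq}. Thickening $M_j\cup C_j$ to a bordered Runge neighborhood and running the argument of the second paragraph then yields $u_{j+1}\in\CMI_*(M_{j+1},\r^n)$ close to $u_j$ on $M_j$. Choosing the successive approximations tight enough along the exhaustion, a standard telescoping estimate shows that $(u_j)$ converges in the compact-open $\Cscr^1$ topology to a nondegenerate $\tilde u\in\CMI_*(M,\r^n)$ with $\Flux_{\tilde u}=\pgot$ and arbitrarily $\Cscr^1$-close to $u$ on $S$.
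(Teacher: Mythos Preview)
Your proposal is correct and follows essentially the same approach as the paper's proof: both establish the result by first upgrading the generalized conformal minimal immersion on $S$ to a genuine one on a bordered Runge neighborhood via a period dominating spray and the Oka property of $\Agot^*$ (the paper's Lemma \ref{lem:step1}), and then run an inductive extension along a Morse exhaustion, handling the noncritical and critical (index $1$) cases exactly as you describe (the paper's Lemma \ref{lem:step2}). One minor omission: in the critical case you should also treat the Morse index $0$ situation, where a new simply connected component of the sublevel set appears; there you simply define $u_{j+1}$ on the new component to be any nondegenerate conformal minimal immersion.
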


\begin{proof}
Let $\rho:M\to\r$ be a smooth strongly subharmonic Morse exhaustion function.
We  exhaust $M$ by an increasing sequence 
$M_1\subset M_2\subset\cdots\subset \bigcup_{i=1}^\infty M_i=M$
of compact smoothly bounded domains of the form $M_i=\{p\in M\colon \rho(p)\le c_i\}$,
where $c_1<c_2<\cdots$ is an increasing sequence of regular values of $\rho$
with $\lim_{i\to\infty} c_i =+\infty$. Each domain $M_i$ is therefore a compact bordered Riemann surface,
possibly disconnected. We may assume that $\rho$ has at most one critical
point $p_i$ in each difference $M_{i+1}\setminus M_i$. It then follows that $M_{i}$ is
 Runge in $M$ for every $i\in \n$. Finally, since $S$ is Runge, we may assume without 
 loss of generality that $S\subset \mathring M_1$ and $S$ is a strong deformation retract of $M_1$, 
 hence the inclusion map $S\hookrightarrow M_1$ induces an isomorphism
$H_1(S;\z)\cong H_1(M_1;\z)$ of the homology groups.

We proceed by induction. The basis is given by the following lemma.

\begin{lemma}\label{lem:step1}
Every $(u,f\theta)\in\GCMI(S, \r^n)$  for $n\ge 3$ can be approximated in the $\Cscr^1(S)$ topology 
by nondegenerate conformal minimal immersions in $\CMI_*(M_1,\r^n)$.
\end{lemma}

\begin{proof}
Since $S=K\cup \Gamma$ is a strong deformation retract of $M_1$, 
we may assume that $S$ is connected; the same argument can be applied on any connected component.

By part (a) of Theorem \ref{th:local}, and  deforming $(u,f\theta)$ slightly on 
$\Gamma$, we may assume that $(u,f\theta)\in\GCMI_*(S,\r^n)$, 
i.e., it is nondegenerate in the sense of Definition \ref{def:generalized}.

{\em Claim:} It is possible to approximate $f\in \Ogot_*(S,\Agot^*)$ as closely as desired 
uniformly on $S$ by a holomorphic map $f_1:M_1\to\Agot^*$ such that 
\begin{equation}\label{eq:sameperiod}
	\int_C (f_1-f)\theta=0 \quad  \text{for every closed curve} \ C\subset S.  
\end{equation}
Assume for a moment that this holds.
Since $H_1(S;\z)\cong H_1(M_1;\z)$ and $f\theta$ has no real periods on $S$, the 
same is true for $f_1$ on $M_1$ in view of (\ref{eq:sameperiod}). Hence, $f_1$
provides a conformal minimal immersion $u_1\in \CMI(M_1,\r^n)$ by the expression 
\[
	u_1(p)=u(p_0)+\int_{p_0}^p \Re (f_1\theta), \quad p\in M_1, 
\]
where $p_0\in K$ is any base point. Furthermore, since $S$ is connected, 
$u_1$ can be assumed to be as close as desired to $u$ in the $\Cscr^1(S)$ topology 
provided that the approximation of $f$ by $f_1$ 
is close enough. In particular, since $u$ is nondegenerate, 
$u_1$ can be taken in $\CMI_*(M_1,\r^n)$. 

This proves Lemma \ref{lem:step1} provided  that the above claim holds.

The construction of a holomorphic map $f_1\colon M_1\to\Agot^*$ satisfying 
(\ref{eq:sameperiod}) is similar to the proof of the 
Mergelyan approximation theorem for null holomorphic curves (and other classes 
of directed holomorphic immersions) in \cite[Theorem 7.2]{AF2}. 
The main difference is that the period vanishing condition in the latter result 
is now replaced by the condition of matching the periods of a given map.
Here is the outline.

By the assumption, the map $f$ is holomorphic on an open neighborhood $U\subset M$ 
of $K$ and is smooth on $\Gamma$. By part (a) of Theorem \ref{th:local}, we may assume that 
$f$ is nondegenerate. Up to a shrinking of $U$ around $K$, we can apply \cite[Lemma 5.1]{AF2} 
to find a period dominating spray of 
smooth maps $f_w\colon U\cup \Gamma \to \Agot^*$ which are holomorphic on $U$
and depend holomorphically on a parameter $w$ in a ball $B\subset \c^N$, with $f_0=f$.
(One deforms $f$ by flows of holomorphic vector fields on $\Agot$
which generate the tangent space at every point of $\Agot^*$; see (\ref{eq:Phi})
and (\ref{eq:Phi2}) above. The time variables of these flows are holomorphic functions 
on a neighborhood of $S$ in $M$, chosen so as to ensure the period domination property.)

By Mergelyan approximation,  we can approximate the map $f=f_0$ uniformly on $S$ by  a map 
$\tilde f_0$ which is holomorphic on a small open neighborhood 
$V\subset M$ of the set $S$. (Explicitly, we can use \cite[Theorem 3.7.2, p.\ 81]{F2011},
noticing that our set $S$ is a special case of the sets $S=K_0\cup E$ in the cited theorem.)
By applying the same flows to $\tilde f_0$, we get a new holomorphic spray of maps 
$\tilde f_w\colon V\to\Agot^*$ which approximates the initial spray $f_w$ uniformly
on $S$, and uniformly with respect to the parameter $w$.  
(This part of the construction can be done with an arbitrary complex 
manifold $X$ in place of $\Agot^*$.) 

Since $\Agot^*$ is an Oka manifold (see Remark \ref{rem:Oka})
and $S$ is Runge in $M$ and a deformation retract of $M_1$, 
we can apply \cite[Theorem 5.4.4, p.\ 193]{F2011} (the main result of Oka theory)
to approximate the spray $\tilde f_w$ uniformly on $S$ (and uniformly with respect to the
parameter $w$) by a holomorphic spray of maps $g_w \colon M_1\to X$. 
(The parameter set $B\subset \c^N$ is allowed to shrink a little.  
The topological condition on the inclusion $S\hra M_1$ is used to get the existence of a 
continuous extension of the spray $\tilde f_w$ from $S$ to $M_1$,  a necessary condition 
to apply the Oka principle.) 

If both approximations made above are close enough, then there exists a point $w_0\in B$ close to
the origin such that the map $g_{w_0}\colon M_1\to \Agot^*$ satisfies the period condition (\ref{eq:sameperiod}). 
(The last argument is as in the proof of Theorem \ref{th:local} {\rm (c)}.)
Taking this map as our $f_1$ completes the proof of Lemma \ref{lem:step1}.
\end{proof}

The following result provides the inductive step in the recursive process.
\begin{lemma}\label{lem:step2}
Assume that $\pgot\colon H_1(M;\z)\to\r^n$ is a group homomorphism.
Let $i\in\n$ and let $u_i\in\CMI_*(M_i,\r^n)$ be a nondegenerate conformal minimal immersion such that 
$\Flux_{ u_i}(C)=\pgot(C)$ for all closed curve $C\subset M_i$. Then, $u_i$ can be approximated in the 
$\Cscr^1(M_i)$ topology by nondegenerate conformal minimal immersions $ u_{i+1}\in\CMI_*(M_{i+1},\r^n)$ 
satisfying $\Flux_{ u_{i+1}}(C)=\pgot(C)$ for all closed curve $C\subset M_{i+1}$.
\end{lemma}

\begin{proof}
We consider two essentially different cases.

\smallskip
\noindent{\em The noncritical case}: $\rho$ has no critical value in $[c_i,c_{i+1}]$. In this case, 
there is no change of topology when passing from $M_i$ to $M_{i+1}$, and $M_i$ is a strong deformation 
retract of $M_{i+1}$. The immersion $u_{i+1}$ can then be constructed as in the proof of Lemma \ref{lem:step1}.

\smallskip
\noindent{\em The critical case}: $\rho$ has a critical point $p_{i+1}\in M_{i+1}\setminus M_i$. 
By the assumptions on $\rho$, $p_{i+1}$ is the only critical point of $\rho$ on $M_{i+1}\setminus M_i$ and is a Morse point. 
Since $\rho$ is strongly subharmonic, the Morse index of $p_{i+1}$ is either $0$ or $1$. 

If the Morse index of $p_{i+1}$ is $0$, then a new (simply connected) component of the sublevel set $\{\rho\leq r\}$ 
appears at $p_{i+1}$ when $r$ passes the value $\rho(p_{i+1})$. In this case, we reduce the proof to the noncritical case 
by defining $u_{i+1}$ on this new component as any nondegenerate conformal minimal immersion.

Assume now that the Morse index of $p_{i+1}$ is $1$. In this case, the change of topology of the sublevel set 
$\{\rho\leq r\}$ at $p_{i+1}$ is described by attaching to $M_i$ a smooth
arc $\gamma\subset \mathring M_{i+1}\setminus M_i$ such that $M_i\cup \gamma$ is a 
Runge strong deformation retract of $M_{i+1}$. We assume without loss of generality that $M_i\cup \gamma$ is admissible 
(see Definition \ref{def:admissible}). Since $u_i$ is nondegenerate and $\Flux_{u_i}(C)=\pgot(C)$ for all closed curve 
$C\subset M_i$, we may extend $u_i$ to $M_i\cup\gamma$ as a nondegenerate generalized conformal minimal immersion 
$(\hat u_i,\hat f_i\theta)\in\GCMI_*(M_i\cup\gamma,\r^n)$ such that $\hat u_i=u_i$ on $M_i$ and 
$\int_C \Im(f_i\theta) =\pgot(C)$ for all closed curve $C\subset M_i\cup\gamma$.
This can be done as in \cite[Lemma 3.4]{AF4} where the details are given for the case $n=3$,
but the same proof works in general. 
By Lemma \ref{lem:step1}, we can approximate $(\hat u_i,\hat f_i\theta)$ in $\Cscr^1(M_i\cup\gamma)$ by a conformal
minimal immersion on an open neighborhood of $M_i\cup\gamma$ without changing the flux. This reduces the proof
to the noncritical case considered above.
\end{proof}

Combining Lemmas \ref{lem:step1} and \ref{lem:step2}, we may construct a sequence of nondegenerate 
conformal minimal immersions $\{ u_i\in\CMI_*(M_i)\}_{i\in\n}$ such that:
\begin{itemize}
\item $ u_i$ is as close  to $(u,f\theta)$ as desired in the $\Cscr^1(S)$ topology for all $i\in\n$.
\item $ u_i$ is as close to $ u_{i-1}$ as desired in the $\Cscr^1(M_{i-1})$ topology  for all $i\geq 2$.
\item $\Flux_{ u_i}(C)=\pgot(C)$ for all closed curve $C\subset M_i$ and all $i\in\n$.
\end{itemize}
If these approximations are close enough, then the limit $\tilde u:=\lim_{i\to\infty} u_i:M\to\r^n$ is a nondegenerate conformal 
minimal immersion as close to $(u,f\theta)$ in the $\Cscr^1(S)$ topology as desired and satisfying $\Flux_{\tilde u}=\pgot$. 
This concludes the proof of Theorem \ref{th:Mergelyan}.
\end{proof}

The following Mergelyan type result for conformal minimal immersions into $\r^n$ with $n-2$ fixed components 
was essentially proved in \cite{AFL2}. It will play an important role in the proof of Theorem \ref{th:proper} in Section  \ref{sec:proper}.

\begin{lemma} 
\label{lem:Mergelyan2}
Assume that $M$ is a compact bordered Riemann surface and $K$ is a union of finitely 
many pairwise disjoint, smoothly bounded, compact Runge domains in $\mathring M$. 
Assume that $K$ contains a basis of $H_1(M;\z)$ and naturally identify $H_1(K;\z)=H_1(M;\z)$. 
Let $u=(u^1,\ldots,u^n)\in\CMI_*(K,\r^n)$ be a nondegenerate conformal minimal immersion  
and assume that $u^j$ extends harmonically to $M$ for all $j\geq 3$.
Then $u$ can be approximated in the $\Cscr^1(K)$ topology by nondegenerate conformal minimal immersions 
$\tilde u=(\tilde u^1,\ldots,\tilde u^n)\in\CMI_*(M,\r^n)$ such that $\Flux_{\tilde u}=\Flux_u$ and $\tilde u^j=u^j$ for all $j\geq 3$.
\end{lemma}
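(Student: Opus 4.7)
The plan is to treat this as a Mergelyan-type approximation problem on $M$ in which the last $n-2$ components are held fixed, closely paralleling the proof of Theorem \ref{th:Mergelyan} but with restricted target and spray. Fix a nowhere-vanishing holomorphic $1$-form $\theta$ on $M$ and write $2\partial u^j=f_j\theta$. Then $f_j\in\Oscr(M)$ for $j\geq 3$ (since $u^j$ is harmonic on $M$), whereas $f_1,f_2\in\Oscr(K)$ satisfy $f_1^2+f_2^2=-\sum_{j=3}^n f_j^2=:h\in\Oscr(M)$ on $K$. Because $K$ contains a basis of $H_1(M;\z)$, the conditions $\Flux_{\tilde u}=\Flux_u$, $\tilde u^j=u^j$ for $j\geq 3$, and $\tilde u\approx u$ in the $\Cscr^1(K)$ topology all reduce to finding $(\tilde f_1,\tilde f_2)\in\Oscr(M)^2$ such that: (i) $\tilde f_1^2+\tilde f_2^2=h$ on $M$; (ii) $\tilde f_j$ approximates $f_j$ uniformly on $K$ for $j=1,2$; (iii) $\int_C\tilde f_j\theta=\int_C f_j\theta$ for every loop $C\subset K$ and $j=1,2$; and (iv) $(\tilde f_1,\tilde f_2,f_3,\dots,f_n)\colon M\to\Agot^*$ is nondegenerate. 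Once such $\tilde f_j$ are constructed, setting $\tilde u^j(x)=u^j(p_0)+\int_{p_0}^x\Re(\tilde f_j\theta)$ for $j=1,2$ (with base point $p_0\in K$) and $\tilde u^j=u^j$ for $j\geq 3$ yields the conclusion.

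The first step is an Oka-theoretic extension. Consider the subvariety
\[
Y=\{(x,w_1,w_2)\in M\times\c^2 : w_1^2+w_2^2=h(x)\}
\]
fibered over $M$ by projection. Over $M\setminus E$, where $E:=\{h=0\}$ is discrete in $M$, the fibers are smooth affine conics biholomorphic to $\c^*$, so $Y|_{M\setminus E}$ is a principal $\c^*$-bundle over a Stein base, hence an Oka manifold. Over points of $E$ the fiber degenerates to the nodal pair $\{w_1=\pm iw_2\}$. By combining the Oka principle on $M\setminus E$ with a local analysis near each point of $E\setminus K$, the section $x\mapsto(f_1(x),f_2(x))$ over $K$ can be approximated by a holomorphic section over all of $M$. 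Equivalently, writing $\alpha:=\tilde f_1+i\tilde f_2$ and $\beta:=\tilde f_1-i\tilde f_2$ so that $\alpha\beta=h$, one seeks $\alpha\in\Oscr(M)$ extending $\alpha_0:=f_1+if_2$ on $K$ with $\beta=h/\alpha\in\Oscr(M)$, i.e.\ with zero divisor $\operatorname{div}(\alpha)|_{M\setminus K}$ a prescribed subdivisor of $\operatorname{div}(h)|_{M\setminus K}$ chosen coherently with the factorization $h|_K=\alpha_0\beta_0$ on $K$. This furnishes an initial $(\tilde f_1^{(0)},\tilde f_2^{(0)})\in\Oscr(M)^2$ satisfying (i) and (ii) but with possibly incorrect periods.

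The second step is period correction by a vertical spray. The holomorphic vector field $V(z)=-z_2\partial_{z_1}+z_1\partial_{z_2}$ is tangent to $\Agot$ and annihilates $z_3,\dots,z_n$; its complex-time flow $\phi_t^V(z)=(z_1\cos t-z_2\sin t,\ z_1\sin t+z_2\cos t,z_3,\dots,z_n)$ preserves both $Y$ and the fixed components. For $g\in\Oscr(M)$, put
\[
\tilde f(x;g):=\phi^V_{g(x)}\bigl(\tilde f_1^{(0)}(x),\tilde f_2^{(0)}(x),f_3(x),\dots,f_n(x)\bigr).
\]
At $g=0$ this is the initial approximation, and the first-order period variation in a direction $\epsilon\in\Oscr(M)$ is
\[
\delta\!\int_C\tilde f_1\theta=-\int_C\epsilon\,\tilde f_2^{(0)}\,\theta,\qquad \delta\!\int_C\tilde f_2\theta=\int_C\epsilon\,\tilde f_1^{(0)}\,\theta.
\]
Choosing $2l$ test functions $g_1,\dots,g_{2l}\in\Oscr(M)$ concentrated near generic points where $(\tilde f_1^{(0)},\tilde f_2^{(0)})$ is nonzero makes the resulting period map $\c^{2l}\ni\zeta\mapsto\bigl(\int_{C_j}\tilde f_1\theta,\int_{C_j}\tilde f_2\theta\bigr)_{j=1}^l\in\c^{2l}$ submersive at $\zeta=0$, entirely analogously to Lemma \ref{lem:deformation}. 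The implicit function theorem then produces a $g$ close to $0$ for which all $2l$ periods match exactly, and a final small generic perturbation (as in the proof of Theorem \ref{th:local}\,(a)) ensures nondegeneracy.

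The main obstacle is the first step, the divisor bookkeeping across the singular locus $E$. When $E\cap(M\setminus K)=\emptyset$, the Oka-theoretic extension is immediate from the fact that $Y|_{M\setminus E}$ is an Oka $\c^*$-bundle over a Stein base and $K$ is Runge in $M$. In general one must distribute the zeros of $h$ in $M\setminus K$ between $\alpha$ and $h/\alpha$ coherently with the factorization on $K$ and compatibly with Runge approximation; this is a López--Ros-type analysis carried out in essentially this form in \cite{AFL2}, from which the present lemma inherits its main nontrivial ingredient.
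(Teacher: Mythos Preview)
Your two--step strategy (first extend/approximate the pair $(f_1,f_2)$ holomorphically to $M$ subject to the quadratic relation, then correct periods via the rotational flow $\phi^V$) is sound and closely parallels the paper's proof, which packages both steps into a single citation of \cite[Lemma~3.3]{AFL2}. That cited lemma already delivers a pair $(h^1,h^2)\in\Oscr(M)^2$ with $(h^1)^2+(h^2)^2=\Theta/\theta^2$, with the difference form $\bigl((h^1,h^2)-(g^1,g^2)\bigr)\theta$ exact on an admissible set $S\supset K$, and with the zeros of $(h^1,h^2)$ confined to those of $(f^1,f^2)$ in $K$; so the paper's version is shorter but less explicit about the mechanism. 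Your separation of the Oka/divisor step from the period--dominating spray in the $(z_1,z_2)$--plane has the merit of making clear exactly which vector field is doing the work.

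There is, however, one genuine gap. The set $K$ is allowed to be \emph{disconnected}, and your conditions (i)--(iii) only match the periods of $\tilde f_j\theta$ over closed loops in $K$. This guarantees that $\tilde u^j(x)=u^j(p_0)+\int_{p_0}^x\Re(\tilde f_j\theta)$ is well defined on $M$, but it does \emph{not} force $\tilde u^j$ to be close to $u^j$ on components of $K$ other than the one containing $p_0$: the integral from $p_0$ to such a component runs through $M\setminus K$, where you have no control. The paper avoids this by first extending $(u,f\theta)$ to a \emph{connected} Runge admissible set $S=K\cup\Gamma$ (with $\Gamma$ a collection of arcs joining the components of $K$ and avoiding the zeros of $\Theta$), and then requiring exactness of the difference form on all of $S$, not just on $K$. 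In your framework the fix is equally easy: add arcs $\Gamma$ connecting the components of $K$, prescribe the values of $\int_\gamma \tilde f_j\theta$ along each such arc $\gamma$ to equal the target values coming from $u$, and include these extra complex conditions in the period map you make submersive with the spray $\phi^V_{\sum\zeta_k g_k}$. The same bump--function argument (using that $f_1/f_2$ is nonconstant on every component of $K$, hence along each arc) shows this enlarged period map is still submersive. A secondary point: your condition (iv) should explicitly include that $(\tilde f_1,\tilde f_2,f_3,\dots,f_n)$ never vanishes on $M$ (i.e.\ lands in $\Agot^*$, not just $\Agot$); this is exactly the role of the zero--set clause in \cite[Lemma~3.3]{AFL2} and is part of the divisor bookkeeping you already flag.
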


\begin{proof} 
Let $\theta$ be a nowhere vanishing holomorphic $1$-form on $M$. As usual, we write $f^j=2 \partial u^j/\theta\in  \Oscr(K)$ 
for all $j$ and notice that $f^j\in  \Oscr(M)$ for all $j\geq 3$. Denote by $\Theta$ the quadratic holomorphic form 
$-(\sum_{j=3}^n (f^j)^2) \theta^2$ on $M$. 

Let $S=K\cup\Gamma\subset M$ be a Runge connected admissible set (see Definition \ref{def:admissible}) 
such that $\Theta$ does not vanish anywhere on $\Gamma$. Choose a nondegenerate generalized conformal 
minimal immersion $(v,g\theta)\in\GCMI_*(S,\r^n)$ such that
\begin{itemize}
\item $v=u$ and $g=f$ on $K$,
\item $v^j=u^j$ and $g^j=f^j$ on $S$ for all $j\geq 3$, where $v=(v^1,\ldots,v^n)$ and $g=(g^1,\ldots,g^n)$.
\end{itemize}
(We refer the reader to \cite[Proof of Lemma 3.3]{AFL2} for details on how to find such $(v,g\theta)$.) 
By the latter condition, $(g^1)^2+(g^2)^2=-\sum_{j=3}^n (g^j)^2=\Theta/\theta^2$ on $S$. 
Further, since $u$ is nondegenerate, the functions $g^1$ and $g^2$ are lineraly independent in $\Oscr(K)$. 
By \cite[Lemma 3.3]{AFL2}, 
$(g^1,g^2)$  can be uniformly approximated on $S$ by a pair $(h^1,h^2) \subset \Oscr(M)^2$ satisfying
\begin{itemize}
\item $(h^1)^2 + (h^2)^2 = \Theta/\theta^2$,
\item the $1$-form $\big((h^1,h^2)-(g^1,g^2)\big)\theta$ is exact on $S$, and
\item the zeros of $(h^1,h^2)$ on $M$ are those of $(f^1,f^2)$ on $K$ 
(in particular, $(h^1,h^2)$ does not vanish anywhere on $M\setminus K$).
\end{itemize}

Fix a point $p_0\in K$ and set $\tilde u^j(p):= u^j(p_0)+\Re \int_{p_0}^p h^j\theta$, $p\in M$, $j=1,2$, and  $\tilde u^j:=u^j$ 
for all $j=3,\ldots,n$. If the approximation of $(g^1,g^2)$ by $(h^1,h^2)$ is close enough on $S$, then it is clear that 
$\tilde u=(\tilde u^1,\ldots,\tilde u^n)\in\CMI_*(M,\r^n)$ satisfies the conclusion of the lemma.
\end{proof}

%%%%%%%%%%%%%%%%%%%%%%%%%%%%%%%%%%%%%%%%%%%%%
%															               %
%   SECTION: APPROXIMATION BY CONFORMAL MINIMAL EMBEDDINGS                 %
%																       %
%%%%%%%%%%%%%%%%%%%%%%%%%%%%%%%%%%%%%%%%%%%%%
%

\section{Approximation by conformal minimal embeddings}
\label{sec:th1.1}
In this section, we prove the following more precise version of Theorem \ref{th:desing}.

\begin{theorem}
\label{th:desing2}
Let $M$ be an open Riemann surface and let $n\ge 5$ be an integer.
Given a conformal minimal immersion $u\colon M\to \r^n$, a compact Runge set 
$K\subset M$ and a number $\epsilon>0$, there exists a conformal minimal embedding 
$\tilde u\colon M\to \r^n$ such that $\sup_{x\in K}|\tilde u(x) - u(x)|<\epsilon$ 
and $\Flux(\tilde u)=\Flux(u)$.
\end{theorem}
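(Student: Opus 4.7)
My plan is a standard exhaustion-and-induction scheme combining the two main technical results established earlier in the paper. Fix a normal exhaustion $M_1\subset M_2\subset \cdots\subset \bigcup_j M_j=M$ by smoothly bounded, compact, Runge domains with $K\subset \mathring M_1$. Since each $M_j$ is Runge in $M$, the inclusion induces an injection $H_1(M_j;\z)\hra H_1(M;\z)$, so the prescribed flux $\pgot:=\Flux(u)$ restricts unambiguously to each level. I will construct inductively a sequence of conformal minimal embeddings $u_j\colon M_j\to\r^n$ with $\Flux(u_j)=\pgot|_{H_1(M_j;\z)}$, each close to its predecessor on lower levels, and then pass to the compact-open limit.

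For the base step, apply Theorem \ref{th:desingBRS} to $u|_{M_1}$ to obtain a conformal minimal embedding $u_1\colon M_1\to\r^n$ with $\sup_{M_1}|u_1-u|<\epsilon/2$ and flux $\pgot|_{H_1(M_1;\z)}$. For the inductive step, given $u_j\colon M_j\to\r^n$, I view the pair $(u_j,2\di u_j)$ as a generalized conformal minimal immersion on the admissible Runge set $S=M_j$ and apply Theorem \ref{th:Mergelyan} with target flux $\pgot$ to obtain $v_{j+1}\in\CMI(M,\r^n)$ that is $\Cscr^1(M_j)$-close to $u_j$ with $\Flux(v_{j+1})=\pgot$. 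Applying Theorem \ref{th:desingBRS} to $v_{j+1}|_{M_{j+1}}$ then yields a conformal minimal embedding $u_{j+1}\colon M_{j+1}\to\r^n$, flux preserved, which is $\Cscr^1$-close to $v_{j+1}$ on $M_{j+1}$ and hence $\Cscr^1$-close to $u_j$ on $M_j$.

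Since being an embedding on a compact bordered surface is $\Cscr^1$-open, there is a number $\delta_j>0$ such that any $\Cscr^1$-perturbation of $u_j$ on $M_j$ smaller than $\delta_j$ remains an embedding of $M_j$. By controlling the error at step $j+1$ to be smaller than $\min(\epsilon,\delta_1,\ldots,\delta_j)\cdot 2^{-j-2}$, a telescoping estimate shows that the sequence $(u_j)$ converges in the compact-open topology to a conformal minimal immersion $\tilde u\colon M\to\r^n$ with $\sup_K|\tilde u-u|<\epsilon$, $\Flux(\tilde u)=\pgot$, and such that $\tilde u|_{M_j}$ is an embedding for every $j$. In particular $\tilde u$ is globally injective and immersive.

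The delicate point, which I expect to be the main obstacle, is upgrading the collection of compact-level embeddings to the statement that $\tilde u\colon M\to\tilde u(M)$ is a homeomorphism, as the definition of embedding in the paper requires. Since $M$ is only locally compact, global injectivity together with embeddedness on each $M_j$ does not by itself rule out a ``trapped limit'': an escaping sequence $x_n$ in $M$ whose image $\tilde u(x_n)$ converges to a point $\tilde u(x)\in\tilde u(M)$. To exclude this I would strengthen the inductive step by imposing a quantitative separation condition forcing
\[
	\dist\bigl(u_k(M_k\setminus M_j),\,u_k(M_{j-1})\bigr)\ \ge\ \eta_j\ >\ 0 \qquad \text{for all }k\ge j+1,
\]
with $\eta_j$ chosen smaller than the bi-Lipschitz gap of the embedding $u_j$ on $M_j$. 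Propagating this bound under all later perturbations via the exponential error scheme above then yields $\dist\bigl(\tilde u(M\setminus M_j),\,\tilde u(M_{j-1})\bigr)>0$ for every $j$, which rules out trapped limits and delivers the required homeomorphism. This technical refinement is the heart of the argument and follows the template developed for directed holomorphic immersions in \cite{AF2}.
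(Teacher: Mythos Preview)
Your plan is the paper's proof: exhaust $M$ by compact smoothly bounded Runge domains $M_j$, obtain $u_1$ on $M_1$ via Theorem~\ref{th:desingBRS}, then at each stage apply Theorem~\ref{th:Mergelyan} (extension to $M$ with flux $\pgot=\Flux_u$) followed by Theorem~\ref{th:desingBRS} (re-embed on $M_{j+1}$), and take the limit. The paper is in fact terser than you at the last step, asserting in one sentence that sufficiently fast convergence yields an embedding and citing \cite[Proof of Theorem~4.5]{AL-C2}; you correctly isolate the passage from global injectivity to the homeomorphism property as the nontrivial point, which the paper does not spell out.

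One caution on your separation bound: with $\eta_j$ fixed at stage $j$ from data of $u_j$ alone, the condition $\dist\bigl(u_k(M_k\setminus M_j),u_k(M_{j-1})\bigr)\ge\eta_j$ for all later $k$ is not something the exponential error scheme can deliver. Neither Theorem~\ref{th:Mergelyan} nor Theorem~\ref{th:desingBRS} controls where the fresh shell $u_k(M_k\setminus M_{k-1})$ lands in $\r^n$; the embedding property of $u_k$ on $M_k$ guarantees only \emph{some} positive distance from $u_k(M_{j-1})$, not one bounded below by a constant chosen before $u_k$ was built, and shrinking the error on $M_{k-1}$ does nothing to prevent that shell from approaching the old image. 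The separation constants must be recorded \emph{a posteriori}, after each $u_k$ is constructed, and all subsequent step errors chosen small relative to every constant already on record---that is the argument the paper's citation (and yours to \cite{AF2}) is standing in for, and it is worth writing out rather than treating as routine.
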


\begin{proof}
Exhaust $M$ by an increasing sequence $M_1\subset M_2\subset\cdots\subset \bigcup_{i=1}^\infty M_i=M$ of compact,
smoothly bounded,  Runge  domains such that $K\subset M_1$ and each domain $M_i$ is a compact bordered Riemann surface.

We proceed by induction.

Theorem \ref{th:desingBRS} furnishes a conformal minimal embedding $u_1\in\CMI(M_1,\r^n)$ which is as close as desired to $u$ in 
the $\Cscr^1(M_1)$ topology and satisfies $\Flux_{u_1}(C)=\Flux_u(C)$ for all closed curve $C\subset M_1$. 
We may further assume by Theorem \ref{th:local} {\rm (a)} that $u_1$ is nondegenerate, i.e., $u_1\in \CMI_*(M_1,\r^n)$. 

Let $i\in\n$ and assume the existence of a nondegenerate conformal minimal embedding $u_i\in\CMI_*(M_i,\r^n)$ 
with $\Flux_{u_i}(C)=\Flux_u(C)$ for all closed curve $C\subset M_i$. Theorem \ref{th:Mergelyan} ensures that $u_i$ can 
be approximated in the $\Cscr^1(M_i)$ topology by nondegenerate conformal minimal immersions 
$u_{i+1}\in\CMI_*(M_{i+1},\r^n)$ 
with $\Flux_{u_{i+1}}(C)=\Flux_u(C)$ for all closed curve $C\subset M_{i+1}$. Moreover, in view of Theorem \ref{th:desingBRS}, 
this approximation can be done by embeddings.

This process gives a sequence of nondegenerate conformal minimal embeddings $\{u_i\in\CMI_*(M_i,\r^n)\}_{i\in\n}$ 
such that
\begin{itemize}
\item $u_i$ is as close to $u$ as desired in the $\Cscr^1(K)$ topology for all $i\in\n$.
\item $u_i$ is as close to $u_{i-1}$ as desired in the $\Cscr^1(M_{i-1})$ topology  for all $i\geq 2$.
\item $\Flux_{u_i}(C)=\Flux_u(C)$ for all closed curve $C\subset M_i$ and all $i\in\n$.
\end{itemize}
If these approximations are close enough, then the limit $\tilde u=\lim_{i\to\infty} u_i:M\to\r^n$ 
is a nondegenerate conformal minimal embedding satisfying the conclusion of the theorem.
(See for instance \cite[Proof of Theorem 4.5]{AL-C2} for a similar argument.)
\end{proof}

%%%%%%%%%%
%%%%%%%%%%
%%%%%%%%%%
%%%%%%%%%%  PROPER CONFORMAL MINIMAL EMBEDDINGS
%%%%%%%%%%
%%%%%%%%%%

\section{Construction of proper conformal minimal embeddings}
\label{sec:proper}

In this section we prove Theorem \ref{th:proper} in the following more precise form.

\begin{theorem}
\label{th:proper2}
Let $M$ be an open Riemann surface and let $K\subset M$ be a compact smoothly bounded Runge 
domain in $M$. Let $u=(u^1,\ldots,u^n):K\to\r^n$ be a conformal minimal immersion on a neighborhood of $K$ and 
let $\pgot: H_1(M;\z)\to\r^n$ be a group homomorphism satisfying $\pgot(C)= \Flux_{u} (C)$ for every closed curve 
$C\subset K$.  Then, for any $\epsilon>0$, there exists a nondegenerate conformal minimal immersion 
$\tilde u=(\tilde u^1,\ldots,\tilde u^n):M\to \r^n$ such that 
$\sup_{x\in K}|\tilde u(x) - u(x)|<\epsilon$, $(\tilde u^1,\tilde u^2):M\to\r^2$ is proper,
and $\Flux_{\tilde u}=\pgot$. Furthermore, if $n\geq 5$, the approximating immersions 
$\tilde u:M\to\r^n$ can be taken to be embeddings.
\end{theorem}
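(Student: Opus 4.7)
The plan is to construct $\tilde u$ as the $\Cscr^1_{\mathrm{loc}}$-limit of a sequence of nondegenerate conformal minimal immersions $u_i\in\CMI_*(M_i,\r^n)$ (embeddings when $n\ge 5$) along a standard Runge exhaustion $K=M_0\subset M_1\subset M_2\subset\cdots$ of $M$ by smoothly bounded compact bordered domains, built from a Morse strongly subharmonic exhaustion function so that each transition $M_i\leadsto M_{i+1}$ is either noncritical or involves attaching a single index-$1$ handle (as in the proof of Theorem \ref{th:Mergelyan}). The inductive hypotheses I maintain are that $u_0$ approximates $u$ on $K$ with the correct flux, that $\|u_{i+1}-u_i\|_{\Cscr^1(M_i)}<\epsilon_{i+1}$ for a fast-decreasing sequence $\epsilon_i$ with $\sum_i \epsilon_i<\epsilon/2$, that $\Flux_{u_i}=\pgot|_{H_1(M_i;\z)}$, and, crucially, that $\min_{p\in M_i\setminus \mathring M_{i-1}}\sqrt{(u_i^1(p))^2+(u_i^2(p))^2}>i$ for $i\ge 1$. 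Provided convergence is fast enough, the limit $\tilde u$ satisfies $\|\tilde u-u\|_{\Cscr^0(K)}<\epsilon$ and $\Flux_{\tilde u}=\pgot$; the last hypothesis gives $|(\tilde u^1,\tilde u^2)(p)|>i-\epsilon/2$ for every $p\in M\setminus M_{i-1}$, so $(\tilde u^1,\tilde u^2)\colon M\to\r^2$ is proper; and when each $u_i$ is an embedding of $M_i$, the limit $\tilde u$ is an embedding globally because every pair of points of $M$ lies in some $M_j$ and embeddings form an open subset of $\CMI^1(M_j,\r^n)$ in the $\Cscr^1$ topology.

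The inductive step from $u_i$ to $u_{i+1}$ proceeds in three stages. First, Theorem \ref{th:Mergelyan} extends $u_i$ to a nondegenerate conformal minimal immersion $v$ on a neighborhood of $M_{i+1}$ with $\Flux_v=\pgot|_{M_{i+1}}$, treating the critical case via generalized conformal minimal immersions on admissible sets (Definitions \ref{def:admissible} and \ref{def:generalized}). Second, to enforce the growth of $|(v^1,v^2)|$ on the annular region $M_{i+1}\setminus\mathring M_i$, I apply Lemma \ref{lem:Mergelyan2}: after a preliminary step arranging (by classical Runge approximation for harmonic functions) that $v^3,\ldots,v^n$ extend harmonically to all of $M$, I construct a generalized conformal minimal immersion $(w,g\theta)$ on an admissible set $M_i\cup\Gamma\subset M_{i+1}$, where $\Gamma$ is a sufficiently dense \emph{labyrinth} of arcs in $M_{i+1}\setminus\mathring M_i$, with $w=v$ on $M_i$, with $w^j=v^j$ on $\Gamma$ for $j\ge 3$, and with $|(w^1,w^2)|>i+2$ prescribed along $\Gamma$. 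Lemma \ref{lem:Mergelyan2}, combined with the Mergelyan method of Section \ref{sec:Mergelyan}, then produces an approximating $u_{i+1}\in\CMI_*(M_{i+1},\r^n)$ with $u_{i+1}^j=v^j$ for $j\ge 3$, with $\Flux_{u_{i+1}}=\pgot|_{M_{i+1}}$, $\Cscr^1$-close to $u_i$ on $M_i$ and $\Cscr^0$-close to $w$ on $\Gamma$; if $\Gamma$ is dense enough in $M_{i+1}\setminus\mathring M_i$ and the approximation close enough, the lower bound $|(u_{i+1}^1,u_{i+1}^2)|>i+1$ propagates by continuity to the whole annulus. Third, for $n\ge 5$, Theorem \ref{th:desingBRS} perturbs $u_{i+1}$ into a conformal minimal embedding $M_{i+1}\hookrightarrow\r^n$ with arbitrarily small $\Cscr^1(M_{i+1})$ error, which preserves both the strict lower bound and the flux.

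The main obstacle is the labyrinth construction in the second stage: forcing the growth of $|(u^1,u^2)|$ on the entire annulus $M_{i+1}\setminus\mathring M_i$ rather than only along arcs, while using only two of the $n$ components and respecting the conformality constraint $(f^1)^2+(f^2)^2=-\sum_{j\ge 3}(f^j)^2$ coming from the null quadric. This is precisely the role of Lemma \ref{lem:Mergelyan2}, which produces the pair $(f^1,f^2)$ as a solution of this algebraic square-root equation on all of $M_{i+1}$ with prescribed approximation on the admissible set; the equation is solvable because $n\ge 3$ (in fact the freedom in $\sum_{j\ge 3}(f^j)^2$ is what allows us to hit the prescribed $(f^1,f^2)$ on $\Gamma$), and the density of $\Gamma$ together with the $\Cscr^1$-control on the approximation bound the oscillation of $(f^1,f^2)$ between arcs of the labyrinth. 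This is the higher-dimensional analogue of the labyrinth-and-L\'opez-Ros construction pioneered in the $\r^3$ case by Alarc\'on and L\'opez in \cite{AL1} and refined in \cite{AFL2}.
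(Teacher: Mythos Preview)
Your inductive scheme and your use of Theorems \ref{th:Mergelyan} and \ref{th:desingBRS} match the paper's, but the second stage of your inductive step contains a genuine gap. You prescribe $|(w^1,w^2)|>i+2$ along a dense system of arcs $\Gamma\subset M_{i+1}\setminus\mathring M_i$, approximate $(w,g\theta)$ by $u_{i+1}\in\CMI_*(M_{i+1},\r^n)$, and assert that the lower bound ``propagates by continuity to the whole annulus''. But the Mergelyan-type approximation you invoke (whether Theorem \ref{th:Mergelyan} or an admissible-set version of Lemma \ref{lem:Mergelyan2}) controls $u_{i+1}$ only on $M_i\cup\Gamma$, not on its complement in $M_{i+1}$. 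Between arcs of $\Gamma$ you have no bound whatsoever on $\di u_{i+1}$, so the oscillation of $(u_{i+1}^1,u_{i+1}^2)$ in the gaps is unconstrained; making $\Gamma$ denser does not help, since the approximation tolerance on $\Gamma$ says nothing about the holomorphic data off $\Gamma$, and the subharmonicity of $(u^1)^2+(u^2)^2$ gives the wrong direction for a lower bound in the interior. Labyrinths in \cite{AL1} are used to force intrinsic length of crossing paths to be large, not to control pointwise values by density; your appeal to them is not the right analogy.

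The paper's mechanism is different and is precisely designed to avoid this problem. It works with $\max\{u^1,u^2\}$ and partitions $bM_i$ into arcs $\alpha_{j,k}$ on each of which a \emph{specified} component $u_i^\sigma$ already exceeds $i$. Radial arcs $\gamma_{j,k}$ cut the annulus into cells $\Omega_{j,k}$; after one pass with Theorem \ref{th:Mergelyan}, each cell contains a small disk $D_{j,k}$ such that $v^\sigma>i$ on $\overline{\Omega_{j,k}\setminus D_{j,k}}$. The key step is then to apply Lemma \ref{lem:Mergelyan2} \emph{twice}, each time modifying only one of the first two components while keeping the other, together with $v^3,\ldots,v^n$, \emph{identically fixed on all of $M_{i+1}$}: first add a large constant to $v^2$ on $\bigcup_{I_1}D_{j,k}$ while keeping $v_1^1=v^1$ globally; then add a large constant to $v_1^1$ on $\bigcup_{I_2}D_{j,k}$ while keeping $v_2^2=v_1^2$ globally. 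Because at each stage one of the two components is held exactly fixed on the whole of $M_{i+1}$, its previously established lower bound on $\overline{\Omega_{j,k}\setminus D_{j,k}}$ (a region \emph{not} contained in the approximation set) survives, while the other component is pumped up on the disks where needed. This alternating, one-component-at-a-time modification is the idea your argument is missing.
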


Theorem \ref{th:proper2} was already proved for $n=3$ by Alarc\'on and L\'opez in \cite{AL1}.
Their proof uses the Weierstrass representation of conformal minimal immersions $M\to\r^3$ 
and hence it does not generalize to the case $n>3$.

\begin{proof}
Let $\rho:M\to\r$ be a smooth strongly subharmonic Morse exhaustion function and 
exhaust $M$ by an increasing sequence 
$M_1\subset M_2\subset\cdots\subset \bigcup_{i=1}^\infty M_i=M$
of compact smoothly bounded Runge domains of the form $M_i=\{p\in M\colon \rho(p)\le c_i\}$,
where $c_1<c_2<\cdots$ is an increasing sequence of regular values of $\rho$
with $\lim_{i\to\infty} c_i =+\infty$. Each domain $M_i$ is a compact bordered Riemann surface,
possibly disconnected. Assume that $\rho$ has at most one critical
point $p_i$ in each difference $M_{i+1}\setminus M_i$. Since $K$ is Runge in $M$, 
we may also assume that $M_1=K$.

Since $K$ is compact, we may assume up to a translation that $\max\{u^1,u^2\}>1$ on $bK$. 
Further, by Theorem \ref{th:local} {\rm (a)} we may assume that $u$ is nondegenerate 
(see Definition \ref{def:nondegenerate}).

We proceed by induction. The initial immersion is $u_1=u\in\CMI_*(M_1,\r^n)$. 
The inductive step is furnished be the following lemma.

\begin{lemma}
\label{lem:proper}
Let $i\in\n$ and let $u_i=(u_i^1,\ldots,u_i^n)\in\CMI_*(M_i,\r^n)$ be a nondegenerate conformal 
minimal immersion such that 
\begin{enumerate}[\rm (I)]
\item $\Flux_{u_i}(C)=\pgot(C)$ for all closed curve $C\subset M_i$ and
\item $\max\{u_i^1,u_i^2\}>i$ on $bM_i$.
\end{enumerate}
Then $u_i$ can be approximated in the $\Cscr^1(M_i)$ topology by nondegenerate 
conformal minimal immersions $u_{i+1}=(u_{i+1}^1,\ldots,u_{i+1}^n)\in\CMI_*(M_{i+1},\r^n)$ such that 
\begin{enumerate}[\rm (i)]
\item $\Flux_{u_{i+1}}(C)=\pgot(C)$ for all closed curve $C\subset M_{i+1}$,
\item $\max\{u_{i+1}^1,u_{i+1}^2\}>i$ on $M_{i+1}\setminus \mathring M_i$, and
\item $\max\{u_{i+1}^1,u_{i+1}^2\}>i+1$ on $bM_{i+1}$.
\end{enumerate}
\end{lemma}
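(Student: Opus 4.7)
The plan is to combine the noncritical/critical extension scheme from the proof of Theorem \ref{th:Mergelyan} with a labyrinth-style modification of the two coordinates $u^1,u^2$ via Lemma \ref{lem:Mergelyan2}, keeping the remaining coordinates $u^3,\ldots,u^n$ fixed throughout.

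First (extension phase), I apply the noncritical/critical case analysis of Lemmas \ref{lem:step1}--\ref{lem:step2} to produce a nondegenerate conformal minimal immersion $\hat u\in \CMI_*(M_{i+1},\r^n)$ with $\Flux_{\hat u}=\pgot$, arbitrarily $\Cscr^1$-close to $u_i$ on $M_i$. By hypothesis (II) and continuity, $\max\{\hat u^1,\hat u^2\}>i$ on an open collar neighborhood $U$ of $bM_i$ in $M_{i+1}$.

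Second (labyrinth phase), set $A:=M_{i+1}\setminus \mathring M_i$ and choose a compact Runge admissible set $S=K\cup L\subset M_{i+1}$, where $K\subset \mathring M_{i+1}$ is a smoothly bounded Runge domain containing $M_i$ and a basis of $H_1(M_{i+1};\z)$ (so that Lemma \ref{lem:Mergelyan2} applies with this $K$), and $L=L_1\sqcup L_2\subset A\setminus K$ is a labyrinth consisting of finitely many pairwise disjoint smooth arcs attached transversely to $bK$ and to $bM_{i+1}$, whose endpoints on $bM_{i+1}$ are $\delta$-dense for some small $\delta>0$, and arranged so that every connected component of $A\setminus (U\cup L)$ has its entire boundary in either $\overline U\cup L_1\cup bM_{i+1}$ or $\overline U\cup L_2\cup bM_{i+1}$. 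On $S$ I define a nondegenerate generalized conformal minimal immersion $(v,g\theta)\in\GCMI_*(S,\r^n)$ agreeing with $\hat u$ on $K$, with $v^j=\hat u^j$ and $g^j=2\partial \hat u^j/\theta$ on $S$ for $j\geq 3$, while along each arc of $L_1$ (resp.\ $L_2$) the values of $v^1$ (resp.\ $v^2$) are prescribed to exceed $i+2$ (including at the arc endpoint on $bM_{i+1}$). The pair $(g^1,g^2)$ along the arcs is then chosen so that $(g^1)^2+(g^2)^2=-\sum_{j\geq 3}(g^j)^2$ and so that the compatibility $\Re(\alpha^*g\theta)=d(v\circ\alpha)$ holds for arc parametrizations $\alpha$; this is possible because these are pointwise real two-dimensional constraints while $(g^1,g^2)$ has four real degrees of freedom, and by a small perturbation of the arcs one can ensure that $-\sum_{j\geq 3}(g^j)^2$ does not vanish along $L$.

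Third (approximation and verification), I apply Lemma \ref{lem:Mergelyan2}, in its extension to the admissible set $S$ (which follows by combining its proof with the Mergelyan-type Theorem \ref{th:Mergelyan}), to approximate $(v,g\theta)$ in the $\Cscr^1(S)$ topology by a nondegenerate conformal minimal immersion $u_{i+1}\in\CMI_*(M_{i+1},\r^n)$ with $u_{i+1}^j=\hat u^j$ exactly for $j\geq 3$ and $\Flux_{u_{i+1}}=\pgot$. Conditions (i) and the $\Cscr^1(M_i)$-approximation are built in; condition (iii) follows from the $\delta$-density of arc endpoints on $bM_{i+1}$ together with uniform $\Cscr^1$-closeness; condition (ii) follows from the minimum principle applied componentwise---on a component of $A\setminus(U\cup L)$ whose boundary lies in $\overline U\cup L_1\cup bM_{i+1}$, the harmonic function $u_{i+1}^1$ exceeds $i+1$ on the $L_1$- and nearby $bM_{i+1}$-parts of the boundary, and by refining the sectorial structure of the collar $U$ so that on each sector of $\overline U$ bordering such a component either $u_{i+1}^1>i$ or $u_{i+1}^2>i$ holds throughout, a componentwise combination of the two minimum principles yields $\max\{u_{i+1}^1,u_{i+1}^2\}>i$ in the component.

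The main obstacle is the labyrinth design in the second phase: producing simultaneously the partition $L=L_1\sqcup L_2$, the endpoint density on $bM_{i+1}$, and the sectorial compatibility with $U$ needed to close the minimum-principle argument in the third phase. This is a combinatorial-geometric construction on the annular region $A$ modelled on the classical labyrinths used in the theory of proper minimal surfaces, and I expect the detailed verification---rather than any new ideas---to occupy the bulk of the argument.
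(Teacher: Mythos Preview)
Your overall architecture (critical-case reduction, then radial arcs across the annular collar carrying large prescribed values of $u^1$ or $u^2$) matches the paper, but the verification phase has a genuine gap.

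The minimum-principle step does not close. First, $\max\{u_{i+1}^1,u_{i+1}^2\}$ is only subharmonic, so it obeys no minimum principle; you must work with $u_{i+1}^1$ and $u_{i+1}^2$ individually. But for a cell $\Omega\subset A\setminus(U\cup L)$, the boundary $\partial\Omega$ contains an arc of $bM_{i+1}$ on which you control neither coordinate: the approximation is in $\Cscr^1(S)$ with $S=K\cup L$, and $bM_{i+1}\setminus L$ is disjoint from $S$. The $\delta$-density of endpoints does not help, since $\Cscr^1(S)$-closeness bounds only the derivative of $u_{i+1}$ in the arc direction at the endpoints, not the tangential derivative along $bM_{i+1}$; hence you have no control between endpoints. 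Consequently neither the interior bound (ii) nor the boundary bound (iii) follows. (On the $\overline U$ part of $\partial\Omega$ you also only know $\max\{u^1,u^2\}>i$, not $u^1>i$, so even there the minimum principle for $u^1$ alone is unavailable.)

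The paper never invokes a minimum principle. After the first Mergelyan step produces $v\in\CMI_*(M_{i+1},\r^n)$ with $v^\sigma>i$ on $\gamma_{j,k-1}\cup\alpha_{j,k}\cup\gamma_{j,k}$ and $v^\sigma>i+1$ at the outer endpoints $q_{j,k-1},q_{j,k}$, \emph{continuity of $v$ on $M_{i+1}$} already yields $v^\sigma>i$ on $\overline{\Omega_{j,k}\setminus D_{j,k}}$ and $v^\sigma>i+1$ on $\overline{\beta_{j,k}\setminus D_{j,k}}$ for suitable small discs $D_{j,k}$ abutting $bM_{i+1}$. The only uncontrolled pieces are the discs $D_{j,k}$, and these are handled by two applications of Lemma \ref{lem:Mergelyan2} \emph{as stated} (for smoothly bounded Runge domains, not the admissible-set extension you propose): one approximates the immersion that equals $v$ on $M_i\cup\bigcup_{I_2}\Omega_{j,k}$ and equals $(v^1,v^2+\tau_1,v^3,\ldots,v^n)$ on $\bigcup_{I_1}D_{j,k}$, keeping $v^1,v^3,\ldots,v^n$ exactly fixed on all of $M_{i+1}$; a symmetric second step handles $I_2$. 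The mechanism is that Lemma \ref{lem:Mergelyan2} lets you add an arbitrary constant to a single coordinate on isolated discs while leaving the other $n-1$ coordinates unchanged, which forces (ii) and (iii) on the discs directly.
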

\begin{proof}
By Theorem \ref{th:local} {\rm (c)} we may assume that $u_i$ extends as a conformal minimal immersion 
to an unspecified open neighborhood of $M_i$.

We consider two essentially different cases.

\smallskip
\noindent{\em The noncritical case}: Assume that $\rho$ has no critical value in $[c_i,c_{i+1}]$.

In this case there is no change of topology when passing from $M_i$ to $M_{i+1}$ and $M_i$ is a strong deformation 
retract of $M_{i+1}$. Denote by $m\in\n$ the number of boundary components of $bM_i$. It follows that 
$M_{i+1}\setminus \mathring M_i=\bigcup_{j=1}^m A_j$ where the sets $A_j$, $j=1,\ldots,m$, are pairwise 
disjoint smoothly bounded compact annuli. For each $j\in\{1,\ldots,m\}$ write 
$bA_j=\alpha_j\cup\beta_j$ with $\alpha_j\subset bM_i$ and $\beta_j\subset bM_{i+1}$. 
In view of condition {\rm (II)} in the statement of 
the lemma, there exists an integer $l\geq 3$ such that each $\alpha_j$ splits into $l$ compact subarcs 
$\alpha_{j,k}$, $k\in\z_l=\z/l\z$, satisfying the following conditions:
\begin{enumerate}[\rm ({a}1)]
\item $\alpha_{j,k}$ and $\alpha_{j,k+1}$ intersect at a common endpoint $p_{j,k}$ and 
$\alpha_{j,k}\cap\alpha_{j,a}=\emptyset$ for all 
$a\in\z_l\setminus\{k-1,k,k+1\}$, for all $(j,k)\in I:=\{1,\ldots,m\}\times\z_l$.
\item $\bigcup_{k\in\z_l}\alpha_{j,k}=\alpha_j$ for all $j\in\{1,\ldots,m\}$.
\item There exist subsets $I_1,I_2$ of $I$ such that $I=I_1\cup I_2$, $I_1\cap I_2=\emptyset$, 
and $u_i^\sigma>i$ on $\alpha_{j,k}$ for all $(j,k)\in I_\sigma$, $\sigma=1,2$.
\end{enumerate}

For every $j=1,\ldots,m$ let $\{\gamma_{j,k}\subset A_j\colon (j,k)\in I\}$ be a family of pairwise disjoint 
smooth Jordan arcs such that $\gamma_{j,k}$ connects $p_{j,k}\in \alpha_j$ with a point 
$q_{j,k}\in \beta_j$ and is otherwise disjoint with $bA_j$, . We may assume in addition that the set
\[
	S=M_i\cup \bigcup_{(j,k)\in I} \gamma_{j,k}
\]
is admissible in the sense of Definition \ref{def:admissible}. Recall that $M_i$ is Runge in $M$, 
and hence $S\subset M$ is Runge as well. 
Let $\theta$ be a nowhere vanishing holomorphic $1$-form on $M$. Extend $(u_i,2\partial u_i)$ 
to $S$ as a nondegenerate generalized conformal minimal immersion $(u_i,f\theta)\in\GCMI_*(S,\r^n)$ satisfying that:
\begin{itemize}
\item $u_i^\sigma>i$ on $\gamma_{j,k}\cup\alpha_{j,k}\cup\gamma_{j,k-1}$ for all $(j,k)\in I_\sigma$, $\sigma=1,2$.
\item $u_i^\sigma>i+1$ on $\{q_{j,k},q_{j,k-1}\}$ for all $(j,k)\in I_\sigma$, $\sigma=1,2$.
\end{itemize}
The existence of such extension is trivially ensured by property {\rm (a3)}. 
Theorem \ref{th:Mergelyan} then provides 
$v=(v^1,\ldots,v^n)\in\CMI_*(M_{i+1},\r^n)$ enjoying the following properties:
\begin{enumerate}[\rm ({b}1)]
\item $v$ is as close as desired to $u_i$ in the $\Cscr^1(M_i)$ topology.
\item $v^\sigma>i$ on $\gamma_{j,k}\cup\alpha_{j,k}\cup \gamma_{j,k-1}$ for all $(j,k)\in I_\sigma$, $\sigma=1,2$.
\item $v^\sigma>i+1$ on $\{q_{j,k},q_{j,k-1}\}$ for all $(j,k)\in I_\sigma$, $\sigma=1,2$.
\item $\Flux_v(C)=\Flux_{u_i}(C)$ for any closed curve $C\subset M_i$.
\end{enumerate}

Denote by $\beta_{j,k}$ the subarc of $\beta_j$ connecting $q_{j,k-1}$ and $q_{j,k}$ and containing $q_{j,a}$ for no 
$a\in\z_l\setminus\{k-1,k\}$, for all $(j,k)\in I$. Denote by $\Omega_{j,k}\subset A_j$ the closed disc bounded by $\gamma_{j,k-1}$, 
$\alpha_{j,k}$, $\gamma_{j,k}$, and $\beta_{j,k}$, $(j,k)\in I$. By {\rm (b2)}, {\rm (b3)} and the continuity of $v$ 
there exist compact, smoothly bounded discs 
$D_{j,k}\subset \Omega_{j,k}\setminus (\gamma_{j,k-1}\cup\alpha_{j,k}\cup\gamma_{j,k})$, $(j,k)\in I$, 
such that $D_{j,k}\cap\beta_{j,k}\neq\emptyset$ is a subarc of $\beta_{j,k}\setminus\{q_{j,k-1},q_{j,k}\}$ 
and the following conditions hold:
\begin{enumerate}[\rm ({b}1')]
\item[\rm (b2')] $v^\sigma>i$ on $\overline{\Omega_{j,k}\setminus D_{j,k}}$ for all $(j,k)\in I_\sigma$, $\sigma=1,2$.
\item[\rm (b3')] $v^\sigma>i+1$ on $\overline{\beta_{j,k}\setminus D_{j,k}}$ for all $(j,k)\in I_\sigma$, $\sigma=1,2$.
\end{enumerate}

Assume that $I_1\neq \emptyset$, otherwise $I_2\neq \emptyset$ and we would reason in a symmetric way.

Consider the compact smoothly bounded Runge domain
\[
	S_1=M_1\cup \Big( \bigcup_{(j,k)\in I_2} \Omega_{j,k}\Big) \cup \Big( \bigcup_{(j,k)\in I_1} D_{j,k}\Big) .
\]
Observe that $S_1$ is not connected; its components are 
$M_1\cup \bigcup_{(j,k)\in I_2} \Omega_{j,k}$ and $D_{j,k}$, $(j,k)\in I_1$. 
Since $D_{j,k}$ is compact, there exists a constant $\tau_1>0$ such that
\begin{equation}\label{eq:tau1}
\tau_1+v^2>i+1\quad \text{on $\bigcup_{(j,k)\in I_1} D_{j,k}$},
\end{equation}
recall that $v=(v^1,v^2,\ldots,v^n)$.

Denote by $\hat v_1=(\hat v_1^1,\hat v_1^2,\ldots,\hat v_1^n)\in\CMI_*(S_1,\r^n)$ 
the conformal minimal immersion given by 
\begin{equation}\label{eq:hatv11}
\hat v_1= v\quad \text{on $M_1\cup \bigcup_{(j,k)\in I_2} \Omega_{j,k}$,} 
\end{equation}
\begin{equation}\label{eq:hatv12}
\hat v_1=(v^1,\tau_1+v^2,\ldots,v^n)\quad \text{on $\bigcup_{(j,k)\in I_1} D_{j,k}$.}
\end{equation}
Observe that every component of $\hat v_1$ equals the restriction to $S_1$ of the corresponding component of $v$, 
except for $\hat v_1^2$. 
By Lemma \ref{lem:Mergelyan2} we may approximate $\hat v_1$ in the $\Cscr^1(S_1)$ topology by a nondegenerate 
conformal minimal immersion $v_1=(v_1^1,v_1^2,\ldots,v_1^n)\in\CMI_*(M_{i+1},\r^n)$ satisfying the following properties:
\begin{enumerate}[\rm ({c}1)]
\item $v_1$ is as close as desired to $u_i$ in the $\Cscr^1(M_i)$ topology.
\item $v_1^1=v^1$ on $M_{i+1}$.
\item $v_1^a>i$ on $\bigcup_{(j,k)\in I_a}\overline{\Omega_{j,k}\setminus D_{j,k}}$, $a=1,2$. 
Take into account {\rm (b2')}, \eqref{eq:hatv11}, and {\rm (c2)}.
\item $v_1^a>i+1$ on $\bigcup_{(j,k)\in I_a}\overline{\beta_{j,k}\setminus D_{j,k}}$, $a=1,2$.  
See {\rm (b3')}, \eqref{eq:hatv11},  and {\rm (c2)}.
\item $v_1^2>i+1$ on $\bigcup_{(j,k)\in I_1} D_{j,k}$. Take into account \eqref{eq:hatv12} and \eqref{eq:tau1}.
\item $\Flux_{v_1}(C)=\Flux_{u_i}(C)$ for any closed curve $C\subset M_i$. See {\rm (b4)}.
\end{enumerate}

Assume that $I_2\neq \emptyset$; otherwise the immersion $u_{i+1}=v_1$ satisfies the conclusion of the lemma 
and we are done.  Indeed, if $I_2=\emptyset$, then $I_1=I$ and we have
\[
	M_{i+1}\setminus M_i\subset \big(\bigcup_{(j,k)\in I_1}\overline{\Omega_{j,k} 
	\setminus D_{j,k}}\big) \cup \big( \bigcup_{(j,k)\in I_1} D_{j,k}\big)
\]
and
\[
	bM_{i+1}\subset \big(\bigcup_{(j,k)\in I_1}\overline{\beta_{j,k}\setminus D_{j,k}}\, \big) 
	\cup \big( \bigcup_{(j,k)\in I_1} D_{j,k}\big).
\]
Therefore, properties {\rm (c3)} and {\rm (c5)} above imply Lemma \ref{lem:proper} {\rm (ii)}, 
whereas {\rm (c4)} and {\rm (c5)} 
ensure {\rm (iii)}. Finally {\rm (c6)}, Lemma \ref{lem:proper} {\rm (I)}, and the fact that $M_i$ 
is a strong deformation retract 
of $M_{i+1}$ give {\rm (iii)}. This and {\rm (c1)} would conclude the proof.

Consider the compact Runge, smoothly bounded domain
\[
S_2=M_2\cup \Big( \bigcup_{(j,k)\in I_1} \Omega_{j,k}\Big) \cup \Big( \bigcup_{(j,k)\in I_2} D_{j,k}\Big) .
\]
Since $I_2\neq\emptyset$, $S_2$ is not connected. Pick a constant $\tau_2>0$ such that
\begin{equation}\label{eq:tau2}
\tau_2+v_1^1>i+1\quad \text{on $\bigcup_{(j,k)\in I_2} D_{j,k}$}.
\end{equation}

Define $\hat v_2=(\hat v_2^1,\hat v_2^2,\ldots,\hat v_2^n)\in\CMI_*(S_1,\r^n)$ by 
\begin{equation}\label{eq:hatv112}
\hat v_2= v_1\quad \text{on $M_1\cup \bigcup_{(j,k)\in I_1} \Omega_{j,k}$,} 
\end{equation}
\begin{equation}\label{eq:hatv122}
\hat v_2=(\tau_2+v_1^1,v_1^2,\ldots,v_1^n)\quad \text{on $\bigcup_{(j,k)\in I_2} D_{j,k}$.}
\end{equation}
Now every component of $\hat v_2$ equals the restriction to $S_2$ of the corresponding component of $v_1$, 
except for $\hat v_2^1$. 
By Lemma \ref{lem:Mergelyan2} we may approximate $\hat v_2$ in the $\Cscr^1(S_2)$ 
topology by an immersion $v_2=(v_2^1,v_2^2,\ldots,v_2^n)\in\CMI_*(M_{i+1},\r^n)$ such that:
\begin{enumerate}[\rm ({d}1)]
\item $v_2$ is as close as desired to $u_i$ in the $\Cscr^1(M_i)$ topology.
\item $v_2^2=v_1^2$ on $M_{i+1}$.
\item $v_2^a>i$ on $\bigcup_{(j,k)\in I_a}\overline{\Omega_{j,k}\setminus D_{j,k}}$, $a=1,2$. 
Take into account {\rm (c3)}, \eqref{eq:hatv112}, and {\rm (d2)}.
\item $v_2^a>i+1$ on $\bigcup_{(j,k)\in I_a}\overline{\beta_{j,k}\setminus D_{j,k}}$, $a=1,2$.  
See {\rm (c4)}, \eqref{eq:hatv112},  and {\rm (d2)}.
\item $v_2^a>i+1$ on $\bigcup_{(j,k)\in I\setminus I_a} D_{j,k}$. 
Take into account {\rm (c5)}, \eqref{eq:hatv112}, \eqref{eq:hatv122}, and \eqref{eq:tau2}.
\item $\Flux_{v_1}(C)=\Flux_{u_i}(C)$ for any closed curve $C\subset M_i$. See {\rm (c6)}.
\end{enumerate}

Set $u_{i+1}=v_2\in\CMI_*(M_{i+1},\r^n)$. Since obviously
\[
	M_{i+1}\setminus M_i\subset \big(\bigcup_{(j,k)\in I_1}
	\overline{\Omega_{j,k}\setminus D_{j,k}}\big) \cup \big( \bigcup_{(j,k)\in I_1} D_{j,k}\big)
\]
and
\[
	bM_{i+1}\subset \big(\bigcup_{(j,k)\in I_1}
	\overline{\beta_{j,k}\setminus D_{j,k}}\big) \cup \big( \bigcup_{(j,k)\in I_1} D_{j,k}\big),
\]
{\rm (d3)} and {\rm (d5)} ensure condition {\rm (ii)} in the lemma, {\rm (d4)} and {\rm (d5)} give {\rm (iii)}, and {\rm (d6)}, 
Lemma \ref{lem:proper} {\rm (I)}, and the fact that $M_i$ is a strong deformation retract of $M_{i+1}$ imply {\rm (i)}. 
Taking into account {\rm (d1)}, this concludes the proof of the lemma in the noncritical case.

\smallskip
\noindent{\em The critical case}: Assume that $\rho$ has a critical point $p_{i+1}\in M_{i+1}\setminus M_i$. 

By the assumptions on $\rho$, $p_{i+1}$ is the only critical point of $\rho$ on $M_{i+1}\setminus M_i$ and 
it is a Morse point of Morse index either $0$ or $1$. 

Assume first that the Morse index of $p_{i+1}$ is $0$. In this case a new (simply connected) component 
of the sublevel set $\{\rho\leq r\}$ appears at $p_{i+1}$ when $r$ passes the value $\rho(p_{i+1})$. 
We then reduce the proof of the lemma to the 
noncritical case by defining $u_{i+1}=(u_{i+1}^1,\ldots,u_{i+1}^n)$ on this new component, $D$, 
as any nondegenerate conformal minimal immersion with $\max\{u_{i+1}^1,u_{i+1}^2\}>i+1$ on $D$.

Assume now that the Morse index of $p_{i+1}$ is $1$. In this case the change of topology of the sublevel set 
$\{\rho\leq r\}$ at $p_{i+1}$ is described by attaching to $M_i$ a smooth
arc $\gamma\subset \mathring M_{i+1}\setminus M_i$, and hence $M_i\cup \gamma$ is a 
Runge strong deformation retract 
of $M_{i+1}$. We assume without loss of generality that $M_i\cup \gamma$ is admissible in 
the sense of Definition \ref{def:admissible}. 
In view of Lemma \ref{lem:proper} {\rm (I)} and {\rm (II)}, we may extend $u_i$ to $M_i\cup\gamma$ 
as a nondegenerate generalized 
conformal minimal immersion 
$(\hat u_i=(\hat u_i^1,\ldots,\hat u_i^n),\hat f_i\theta)\in\GCMI_*(M_i\cup\gamma,\r^n)$ such that 
$\hat u_i=u_i$ on $M_i$, $\int_C \Im(f_i\theta) =\pgot(C)$ for all closed curve $C\subset M_i\cup\gamma$, and 
$\max\{\hat u_i^1,\hat u_i^2\}>i$ on $\gamma$. By Theorem \ref{th:Mergelyan} we find a Runge compact, smoothly 
bounded domain $\wt M_i$ and a nondegenerate conformal minimal immersion 
$v=(v^1,\ldots,v^n)\in\CMI_*(\wt M_i,\r^n)$ such that
\begin{itemize}
\item $M_i\cup\gamma \subset \mathring{\wt M_i}$ and $\wt M_i$ is a strong deformation retract of $M_{i+1}$,
\item $\Flux_v(C)=\pgot(C)$ for every closed curve $C\subset \wt M_i$, and
\item $\max\{v^1,v^2\}>i$ on $\wt M_i\setminus \mathring M_i$.
\end{itemize}
This reduces the proof to the noncritical case and proves the lemma.
\end{proof}

By recursively applying Lemma \ref{lem:proper} we may construct a sequence of nondegenerate 
conformal minimal immersions 
$\{u_i\in\CMI_*(M_i,\r^n)\}_{i\in\n}$ such that:
\begin{enumerate}[\rm (a)]
\item $u_i$ is as close to $u$ as desired in the $\Cscr^1(K)$ topology for all $i\in\n$.
\item $u_i$ is as close to $u_{i-1}$ as desired in the $\Cscr^1(M_{i-1})$ topology  for all $i\geq 2$.
\item $\Flux_{u_i}(C)=\pgot(C)$ for all closed curve $C\subset M_i$ and all $i\in\n$.
\item $\max\{u_{i+1}^1,u_{i+1}^2\}>i$ on $M_{i+1}\setminus M_i$ for all $i\in\n$.
\item $\max\{u_{i+1}^1,u_{i+1}^2\}>i+1$ on $bM_{i+1}$ for all $i\in\n$.
\end{enumerate}
Furthermore, if $n\geq 5$, applying Theorem \ref{th:desingBRS} at each step in the recursive construction 
we may assmue that
\begin{enumerate}[\rm (a)]
\item[\rm (f)] $u_i$ is an embedding for every $i\in\n$.
\end{enumerate}
 
If the approximations in {\rm (a)} and {\rm (b)} are close enough, then the limit 
$\tilde u=(\tilde u_1,\ldots,\tilde u_n):=\lim_{i\to\infty} u_i:M\to\r^n$ is a nondegenerate conformal minimal immersion 
satisfying the conclusion of the theorem. Indeed, {\rm (c)} trivially implies that $\Flux_{\tilde u}=\pgot$, whereas 
properties {\rm (d)} and {\rm (e)} ensure that $\max\{\tilde u_1,\tilde u_2\}:M\to\r$ and hence 
$(\tilde u_1,\tilde u_2):M\to\r^2$ 
are proper maps. Finally, if $n\geq 5$, $\tilde u$ can be taken an embedding; 
take into account {\rm (f)} and see for instance 
the proof of Theorem 4.5 in \cite{AL-C2} for a similar argument. 

This concludes the proof of Theorem \ref{th:proper2}.
\end{proof}

%%%%%%%%%%
%%%%%%%%%%
%%%%%%%%%%
%%%%%%%%%%
%%%%%%%%%%
%%%%%%%%%%

\subsection*{Acknowledgements}
A. Alarc\'on is supported by the Ram\'on y Cajal program of the Spanish Ministry of Economy and Competitiveness.
A.\ Alarc\'{o}n and F.\ J.\ L\'opez are partially supported by the MINECO/FEDER grants MTM2011-22547 and MTM2014-52368-P, Spain.  F.\ Forstneri\v c is partially  supported  by the research program P1-0291 and 
the grant J1-5432 from ARRS, Republic of Slovenia. 
Part of this work was made when F.\ Forstneri\v c visited the institute IEMath-Granada with support by 
the GENIL-SSV 2014 program.

We wish to thank an anonymous referee for the remarks which lead to improved presentation.

%%%%%%%%%%
%%%%%%%%%%
%%%%%%%%%%
%%%%%%%%%%
%%%%%%%%%%
%%%%%%%%%%

\vskip 0.2cm

\noindent Antonio Alarc\'{o}n

\noindent Departamento de Geometr\'{\i}a y Topolog\'{\i}a e Instituto de Matem\'aticas (IEMath-GR), Universidad de Granada, E--18071 Granada, Spain.

\noindent  e-mail: {\tt alarcon@ugr.es}

\vspace*{0.3cm}

\noindent Franc Forstneri\v c

\noindent Faculty of Mathematics and Physics, University of Ljubljana, and Institute
of Mathematics, Physics and Mechanics, Jadranska 19, SI--1000 Ljubljana, Slovenia.

\noindent e-mail: {\tt franc.forstneric@fmf.uni-lj.si}

\vspace*{0.3cm}

\noindent Francisco J.\ L\'opez

\noindent Departamento de Geometr\'{\i}a y Topolog\'{\i}a, Universidad de Granada, E--18071 Granada, Spain.

\noindent  e-mail: {\tt fjlopez@ugr.es}
\end{document}